\def\newaliasedtheorem#1[#2]#3{
  \newaliascnt{#1@alt}{#2}
  \newtheorem{#1}[#1@alt]{#3}
  \expandafter\newcommand\csname #1@altname\endcsname{#3}
}
\theoremstyle{plain}
\newtheorem{theorem}{Theorem}[section]
\theoremstyle{definition}
\theoremstyle{remark}
\numberwithin{equation}{section}
\def\Im{\textrm{Im}}
\def\Re{\textrm{Re}} 
\def\11{{\rm 1~\hspace{-1.4ex}l} }
\def\R{\mathbb R}
\def\N{\mathbb N}
\title
[On the asymptotic behavior of moments for NLS]
{On the asymptotic behavior of high order moments for a family of Schr\"odinger equations}
\author[N. Tzvetkov \and N. Visciglia]
{N. Tzvetkov \and N. Visciglia}
\address{N.~Tzvetkov,
Universit\'e de Cergy-Pontoise,  Cergy-Pontoise, F-95000,UMR 8088 du CNRS}
\email{nikolay.tzvetkov@u-cergy.fr}
\address{N.~Visciglia, Dipartimento di Matematica, Universit\`a di Pisa, Largo Bruno Pontecorvo, 5, 56100 Pisa, Italy}
\email{nicola.visciglia@unipi.it}
\begin{document}

\maketitle

\begin{abstract}
We study upper bounds and the asymptotic behavior of high order moments for solutions to a family of linear and nonlinear Schr\"odinger equations. 
\end{abstract}

\section{Introduction}
Consider the free Schr\"odinger equation 
\begin{equation}\label{free}
\begin{cases}
(i\partial_t+\partial_x^2)u=0,\quad (t,x)\in \R\times \R \\
u(0,x)=f\in \Sigma_s(\R), \,\, s\in \N, \,\, s\geq 1,
\end{cases}
\end{equation}
where 
\begin{equation}\label{norms}
\|f\|_{\Sigma_s(\R)}^2=\|f\|_{H^s(\R)}^2+ \int_{\R} x^{2s} |f(x)|^2dx \,.
\end{equation}  
Here $H^s(\R)$ denote the classical Sobolev spaces.  The solution of \eqref{free} can be written as 
\begin{equation}\label{formula}
u(t,x)=\frac{1}{\sqrt{2\pi}}\int_{\R}e^{ix\xi-it \xi^2}\hat{f}(\xi)d\xi,
\end{equation}
where 
\begin{equation}\label{fourier}\hat{f}(\xi)=\frac 1 {\sqrt {2\pi}} \int_\R e^{-i x\xi } f(x) dx
\end{equation}
denotes the Fourier transform of $f\in \Sigma_s(\R),$  $s\geq 1$.  It follows directly from \eqref{formula} that 
\begin{equation}\label{PR1}
\|u(t,\cdot)\|_{H^s(\R)}=\|f\|_{H^s(\R)},\quad \forall\, t\in\R\,.
\end{equation}

Another direct consequence of \eqref{formula} is the invariance of the space $\Sigma_s(\R)$
by the linear flow associated with \eqref{free}, namely $e^{it\partial_x^2 } f\in \Sigma_s(\R)$ for every $t$ with the following quantitative bound:
\begin{equation}\label{basc}\int_\R x^{2s} |u(t,x)|^2 dx\lesssim \langle t\rangle^{2s}.
\end{equation}
 Moreover one can show the exact long-time behavior
\begin{equation}\label{PR2}
\lim_{t\rightarrow \pm\infty}\int_{\R}
\big(\frac xt\big)^{2s} |u(t,x)|^2 dx=2^{2s}\int_{\R}  \big|\partial_x^s f\big|^2 dx \,.
\end{equation}
Property \eqref{PR1} implies that for the solutions of \eqref{free} no migration of Fourier modes is possible.
On the other hand property \eqref{basc} and \eqref{PR2} imply that there is a migration of the conserved $L^2$ mass of the solutions of \eqref{free}  to the spatial infinity.
\\

Our goal in this work is to show that \eqref{basc} and more importantly \eqref{PR2} persist for a large class of perturbations of \eqref{free}, both linear and nonlinear. 
We therefore consider the following family of defocusing NLS:
\begin{equation}\label{NLS}
\begin{cases}
\big(i\partial_t  + \partial_{x}^2  + V(x)+\lambda |u|^{2k}\big) u=0, \quad
(t,x)\in \R\times \R, \lambda\leq 0, k\in \N, k\geq 2,\\
u(0, x)=f\in \Sigma_s(\R), \quad s\in \N, \quad s\geq 1.
\end{cases}
\end{equation}
We suppose that $V:\R\rightarrow \R$ is a potential such that:
 \begin{equation}\label{boundedreivatives}
 |\partial_x^j V(x)|\lesssim (1+ |x|)^{-1},  \quad  \forall j=1,\cdots , s.
 \end{equation}
 Notice that we have imposed decay on the derivatives of $V$ and not on the potential $V$ itself. Indeed in the sequel we shall assume either $V\in L^\infty(\R)$
or $V\in L^\infty(\R)$ and $\lim_{x\rightarrow \infty} V=0$.
It will depend on the kind of results we are looking for, and in any case it will be specified 
along the statements.
Since we shall work with the perturbed Sobolev spaces, we need to compute
the operator $\sqrt{-\partial_x^2 - V}$, hence we shall assume
\begin{equation}\label{posit}
-\partial_x^2 - V\geq 0 \hbox{ in the operator sense. }
\end{equation}
As mentioned above, our aim is the analysis of the invariance of the 
space $\Sigma_s(\R)$ along the flow associated with \eqref{NLS} with a quantitative estimate, as well as the asymptotic description of the moments
$$\int_\R x^{2s} |u(t,x)|^2 dx \hbox{ as } t\rightarrow \pm \infty.$$
Notice that for $\lambda=0$ the equation \eqref{NLS} reduces to the purely linear Schr\"odinger equation with a linear perturbation $V$, while for $\lambda<0$ it is nonlinear, defocusing and not translation invariant, unless $V\equiv 0$.
It is worth mentioning that, at the best of our knowledge, very few results are known about the growth of higher order
moments, in the linear case either. We quote in this direction \cite{DGS},
\cite{ES},  \cite{RS}, \cite{S}, \cite{Z} and the references therein. In the aforementioned papers the results are mainly devoted to the study of qualitative informations on the growth of high order moments, but we are not aware of any result where it is studied its
asymptotic behavior for large times.
\\

Our first result provides a general statement about upper bounds (and not yet on the asymptotic behavior) 
of the moments of order $s$ for a large class of linear potential perturbations.
At the best of our knowledge the result below is not stated elsewhere in the literature
under its full generality.
\begin{theorem}\label{mainlin}
Let $s\geq 1$ be a fixed integer.
Assume that $V$ is real valued, $V\in L^\infty(\R)$ and satisfies \eqref{boundedreivatives}, \eqref{posit}.
Then 
for every $f\in \Sigma_s(\R)$, 
we get the following upper-bound:
\begin{equation}\label{momentssV}
\|e^{it (\partial_x^2 + V)} f\|_{\Sigma_s(\R)}^{2}\lesssim 
\langle t \rangle^{2s}.\end{equation}
\end{theorem}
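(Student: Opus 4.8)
The plan is to split the $\Sigma_s(\R)$ norm according to its definition \eqref{norms} into the Sobolev part $\|u(t)\|_{H^s(\R)}$ and the weighted part $\int_\R x^{2s}|u(t,x)|^2\,dx$, where $u(t)=e^{it(\partial_x^2+V)}f$, and to bound each separately. For the Sobolev part I would show that $\|u(t)\|_{H^s(\R)}$ stays \emph{bounded} in $t$. The clean way is to work with the self-adjoint operator $H=-\partial_x^2-V\geq 0$ (see \eqref{posit}): since $H$ commutes with its own propagator, the quantities $\|H^{j/2}u(t)\|_{L^2(\R)}$, $0\leq j\leq s$, are conserved, hence equal to their values at $t=0$. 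It then remains to establish the norm equivalence $\|g\|_{H^s(\R)}\lesssim\|g\|_{L^2(\R)}+\|H^{s/2}g\|_{L^2(\R)}$; this is exactly the point where $V\in L^\infty(\R)$, the positivity \eqref{posit} and the decay \eqref{boundedreivatives} of the derivatives of $V$ enter, in order to build a functional calculus for $\sqrt{H}$ and to compare the perturbed Sobolev spaces with the flat ones. Together with conservation of the $L^2$ mass this yields $\|u(t)\|_{H^s(\R)}\lesssim 1$, which is much better than the $\langle t\rangle^{2s}$ tolerated by \eqref{momentssV}.

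For the weighted part I would run an induction controlling the whole family of mixed norms
\begin{equation*}
\|x^a\partial_x^b u(t)\|_{L^2(\R)}\lesssim\langle t\rangle^{a},\qquad a,b\geq 0,\ a+b\leq s,
\end{equation*}
the guiding principle being that only the number $a$ of weights, and not the number $b$ of derivatives, should drive the temporal growth. The base case $a=0$ is precisely the uniform bound $\|\partial_x^b u\|_{L^2(\R)}\lesssim 1$ obtained above; for $b=1$ it is just boundedness of the Dirichlet energy, since $\|\partial_x u\|_{L^2}^2=\int V|u|^2+\langle Hf,f\rangle\leq\|V\|_{L^\infty}\|f\|_{L^2}^2+\langle Hf,f\rangle$. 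For the inductive step I would differentiate $\|x^a\partial_x^b u\|_{L^2}^2$ in time using $\partial_t u=i(\partial_x^2+V)u$ and integrate by parts.

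The heart of the matter is the algebra of the resulting identity, where two mechanisms must cooperate. First, because $V$ is real and acts by multiplication, its principal contribution to $\tfrac{d}{dt}\|x^a\partial_x^b u\|_{L^2}^2$ is purely imaginary and therefore drops out, exactly as the potential terms cancel in the model computation $\frac{d}{dt}\int_\R x^{2}|u|^2\,dx=4\,\Im\int_\R x\,u_x\bar u\,dx$ for $s=1$. Second, the commutator $[\partial_x^b,V]$ produces only terms carrying a genuine derivative $\partial_x^j V$ with $j\geq 1$; by \eqref{boundedreivatives} each such factor decays like $(1+|x|)^{-1}$, so it absorbs one power of the weight and reduces matters to mixed norms of strictly lower weight, already controlled by the induction hypothesis. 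After integration by parts, the leading term is estimated by Cauchy--Schwarz in the balanced form
\begin{equation*}
\Big|\,\Im\int_\R x^{2s-1}u_x\bar u\,dx\,\Big|\leq\|x^{s-1}u_x\|_{L^2(\R)}\,\|x^{s}u\|_{L^2(\R)},
\end{equation*}
so that no factor ever exceeds total order $s$, and the inequality $\frac{d}{dt}\|x^{s}u\|_{L^2}\lesssim\|x^{s-1}u_x\|_{L^2}\lesssim\langle t\rangle^{s-1}$ closes by Gr\"onwall to give $\|x^{s}u\|_{L^2}\lesssim\langle t\rangle^{s}$, i.e. $\int_\R x^{2s}|u|^2\,dx\lesssim\langle t\rangle^{2s}$.

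I expect the main obstacle to be the bookkeeping in this final induction: organizing the many mixed terms $x^a(\partial_x^jV)\partial_x^{b-j}u$ so that each is genuinely of lower weight and total order at most $s$, and checking that the balanced Cauchy--Schwarz pairings never push an estimate above $\Sigma_s(\R)$ regularity, which would be meaningless for $f\in\Sigma_s(\R)$ only. A secondary, more technical point is the norm equivalence for the perturbed Sobolev spaces associated with $\sqrt{H}$, which underpins the uniform boundedness of $\|u(t)\|_{H^s(\R)}$.
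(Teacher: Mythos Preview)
Your proposal is correct and matches the paper's approach: after establishing the norm equivalence $H^s\simeq H^s_V$ (Proposition~\ref{danmig}) to get the uniform $H^s$ bound, the paper runs exactly your time-derivative/Cauchy--Schwarz induction on the family $\int x^{2(s-h)}|\partial_x^h u|^2\,dx\lesssim\langle t\rangle^{2(s-h)}$ (backward in $h$, outer induction on $s$), using the decay \eqref{boundedreivatives} of $\partial_x^j V$ to absorb one weight in the commutator terms, and the vanishing of $\Im\int V|\partial_x^{\bar h-1}u|^2$ to kill the principal potential term. The only items the paper makes explicit beyond your sketch are a spatial cutoff $\varphi_\epsilon$ (so that the weighted integrals are a priori finite before letting $\epsilon\to 0$) and a tailored Gr\"onwall-type lemma (Proposition~\ref{gron}) handling $|F'|\lesssim\langle t\rangle^\alpha\sqrt{F}$---both squarely within the ``bookkeeping'' you anticipate.
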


Concerning the translation invariant nonlinear case   
(namely \eqref{NLS} with $V=0$ and $\lambda\neq 0$) we quote the paper \cite{carles} where it is shown that the moments of order $s$ grow as $t^s$, however the limit as time goes to $\pm \infty$ is not explicitely studied (even if it should follow by a simple extra argument, see Remark~\ref{carlino}). However, the technique used in \cite{carles} seems to be specific for the case $V=0$. More precisely, it is based on the commuting vector fields approach which is not available in the case $V\neq 0$. At this point 
we should mention \cite{CGV} where a suitable pseudo-differential version of vector fields is constructed under a potential perturbation and as a result it is obtained the time decay of the corresponding nonlinear solutions. 
However it is unclear how to use the technique developed in \cite{CGV} to study the behavior of high order moments in the nonlinear setting.
\\

In order to state the nonlinear counterpart of Theorem \ref{mainlin}
(namely for solutions to \eqref{NLS} with $\lambda< 0$) 
first of all we recall  that, due 
to the defocusing character of \eqref{NLS} and due to the classical Sobolev embedding $H^1(\R)\subset L^\infty(\R)$, there exists
an unique global solution $u(t,x)\in {\mathcal C}(\R; H^1(\R))$ 
for every $f\in H^1(\R)$. Therefore from now on we shall not comment anymore about the existence and uniqueness of solution to
\eqref{NLS} for initial data $f\in \Sigma_s(\R)$,  $s\geq 1$.
However even if  $f\in \Sigma_s(\R)$, it is not guaranteed for free that the solution $u(t,x)$ belongs to $\Sigma_s(\R)$ for every time $t\in \R$. This is part of our work, 
along with quantitative information on the behavior of the high order moments for large time.
\\

Next, we introduce a further assumption that we shall use in the nonlinear setting:
\begin{equation}\label{decay} 
\sup_{(t, x)\in \R^2}\,  \langle t\rangle^{\frac{1}{2}} |u(t,x)|<\infty\,,
\end{equation}
where $u(t,x)\in {\mathcal C}(\R; H^1(\R))$ is the unique solution
to \eqref{NLS} with $f\in H^1(\R)$.
\\

We point out  that assumption \eqref{decay} is satisfied for instance in the case
$\lambda<0$ and for any repulsive potentials $V(x)$, namely:
\begin{equation}\label{repulsive}2V(x) + x\partial_x V(x)\geq 0.\end{equation}
In fact under this assumption, by the pseudoconformal energy estimate (see 
\cite{GV} and \cite{Ca}), one can show:
\begin{equation}\label{Jprime}\|xu(t,x)+ 2it \partial_x u(t,x)\|_{L^2({\R})}\lesssim 1.\end{equation}
On the other hand by the Gagliardo-Nirenberg inequality we get
\begin{multline*}
\sqrt {2|t|} \|e^{-i\frac{x^2}{4t}}u(t,x)\|_{L^\infty({\R})}
\\\lesssim \sqrt {2|t|} \|e^{-i\frac{x^2}{4t}}u(t,x)\|_{L^2(\R)}^{\frac {1}{2}}
\|\partial_x (e^{-i\frac{x^2}{4t}}u(t,x))\|_{L^2(\R)}^{\frac{1} {2}}
\\= 
\|u(t,x)\|_{L^2({\R})}^{\frac 1 2}
{\|xu(t,x)+ 2it \partial_x u(t,x)}\|_{L^2({\R})}^{\frac 1 2} \lesssim 1,
\end{multline*}
where we used \eqref{Jprime} and the $L^2$ conservation at the last step.
Hence \eqref{decay} follows under the assumption \eqref{repulsive}.
\\

We also recall that \eqref{decay} has been established in the paper \cite{CGV} for a large class of potentials under a smallness assumption on the initial datum
and with a generic spectral condition on the potential $V$.
\\

Next we give our statement about the invariance of the space $\Sigma_s(\R)$ along with 
an upper bound in
the defocusing nonlinear case (namely \eqref{NLS} with $\lambda<0$).
\begin{theorem}\label{mainnonlin}
Let $s\geq 1$, $k\geq 2$ be two fixed integers, and let $\lambda< 0$.
Assume that the potential $V$ is real valued, $V\in L^\infty(\R)$ and satisfies \eqref{boundedreivatives}, \eqref{posit}. Let $u(t,x)$ satisfies
\eqref{decay}, where $u(t,x)\in \mathcal(\R;H^1(\R))$ is the unique global solution to \eqref{NLS} ($\lambda, k$ as above), with $f\in \Sigma_s(\R)$.
Then we get the following upper bound:
\begin{equation}\label{momentss}\|u(t,x)\|_{\Sigma_s(\R)}^{2}\lesssim 
\langle t\rangle^{2s}.\end{equation}
\end{theorem}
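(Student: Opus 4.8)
The plan is to control the weighted moment $\int_\R x^{2s}|u(t,x)|^2\,dx$ by expressing it through the generator of Galilean boosts, adapted to the presence of the potential. The natural object is the operator $J=x+2it\partial_x$, whose iterates $J^s$ satisfy $J^s(e^{it\partial_x^2}g)=e^{it\partial_x^2}(2it)^s\partial_x^s g$ in the free case, so that $\|J^s u\|_{L^2}$ controls both the $H^s$ part (already bounded by conservation laws) and the weighted $x^{2s}$ part via $x^s = J^s + O(t^s\partial_x^s + \text{lower order})$. Since $\langle t\rangle^{2s}$ is exactly the growth we expect, the strategy is to derive a differential inequality for a suitable energy functional built from $J^s u$ (or a potential-adapted analogue) and close it by Gronwall, using the decay hypothesis \eqref{decay} to absorb the nonlinear contributions.

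First I would set up the energy $E_s(t)=\|J^s u(t)\|_{L^2}^2$ (or replace $J$ by the modified generator $\tilde J = x + 2it\,\partial_x$ conjugated appropriately to accommodate $V$) and differentiate in time. Using the equation \eqref{NLS}, the commutator $[i\partial_t+\partial_x^2, J]$ vanishes in the free case, so the time derivative of $E_s$ is driven entirely by the commutators of $J^s$ with the perturbation $V+\lambda|u|^{2k}$. The key algebraic fact is that $[\partial_x^2, x]=2\partial_x$ and $[J,\partial_x^2]=0$, so that the commutator $[J^s, V]$ produces terms involving $\partial_x^j V$ for $1\le j\le s$, each of which is controlled by the decay assumption \eqref{boundedreivatives} on the derivatives of $V$. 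Crucially, the potential $V$ itself is not required to decay, only its derivatives, which is exactly why the commutator structure is the right tool.

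The nonlinear commutator $[J^s, |u|^{2k}]$ is where the decay hypothesis enters decisively. Expanding this commutator by Leibniz, one encounters terms of the form $(\partial_x^j|u|^{2k})\,J^{s-j}u$; the factor $\partial_x|u|^{2k}\sim |u|^{2k-1}\partial_x u$ carries $2k$ powers of $u$, and using \eqref{decay} in the form $|u|\lesssim \langle t\rangle^{-1/2}$ together with $k\ge 2$, the pointwise decay $|u|^{2k-2}\lesssim\langle t\rangle^{-(k-1)}\lesssim \langle t\rangle^{-1}$ yields an integrable-in-time coefficient in front of the energy $E_s$. This is precisely the mechanism by which the nonlinearity, despite growing the moments a priori, contributes only a bounded multiplicative factor after Gronwall. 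I would therefore bound $\frac{d}{dt}E_s(t)\lesssim \langle t\rangle^{-1}\sum_{j\le s}\|J^j u\|_{L^2}^2 + \langle t\rangle^{-1}(\cdots)$, close the inequality inductively in $s$, and recover $E_s(t)\lesssim \langle t\rangle^{2s}$.

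\emph{The main obstacle} I anticipate is the interaction between the potential-adapted vector field and the nonlinear commutator: when $V\neq 0$ the clean identity $J^s(e^{it\partial_x^2}g)=e^{it\partial_x^2}(2it)^s\partial_x^s g$ is lost, and one must track error terms arising from $[J^s, V]$ order by order. The bookkeeping is delicate because a single application of $J$ can lower the number of weights by one while raising a derivative, so the induction must simultaneously control the full family $\{\|J^j u\|_{L^2}\}_{j=0}^{s}$ together with the mixed quantities $\|x^a \partial_x^b u\|_{L^2}$ for $a+b\le s$. Establishing that every such error term is either lower order in the induction or carries an integrable time weight — and in particular that the bound \eqref{basc}-type growth $\langle t\rangle^{2s}$ is never exceeded — is the crux of the argument, and is where the hypotheses \eqref{boundedreivatives} and \eqref{decay} must be used in a carefully balanced way.
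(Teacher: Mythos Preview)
Your approach via $J = x + 2it\partial_x$ has a genuine gap when $V\not\equiv 0$, and the paper explicitly flags this (see the discussion after Theorem~\ref{mainlin} and Remark~\ref{carlino}). The commutator expansion reads $[J^s, V] = \sum_{j=1}^s\binom{s}{j}(2it)^j(\partial_x^j V)\,J^{s-j}$, so each term carries an explicit factor $t^j$; the hypothesis $|\partial_x^j V|\lesssim\langle x\rangle^{-1}$ supplies at most one inverse power of the spatial scale, not $j$. Already at level $s=1$ one has $\tfrac{d}{dt}\|Ju\|_{L^2}^2 = -4t\,\Re\langle(\partial_x V)u, Ju\rangle + (\text{nonlinear})$; even exploiting \eqref{decay} via $\|(\partial_x V)u\|_{L^2}\lesssim\|\langle x\rangle^{-1}\|_{L^2}\|u\|_{L^\infty}\lesssim\langle t\rangle^{-1/2}$ this is of order $t^{1/2}\|Ju\|_{L^2}$, which after integration gives only $\|Ju\|_{L^2}\lesssim\langle t\rangle^{3/2}$ and hence $\|xu\|_{L^2}\lesssim\langle t\rangle^{3/2}$, strictly worse than the target $\langle t\rangle$. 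Your asserted bound $\tfrac{d}{dt}E_s\lesssim\langle t\rangle^{-1}\sum_{j\le s}\|J^ju\|_{L^2}^2$ therefore fails for the potential commutator, and the remark that one should ``conjugate $J$ appropriately to accommodate $V$'' is precisely the missing ingredient --- the paper cites \cite{CGV} for such a pseudodifferential construction but notes it is unclear how to use it for moment bounds. (Your handling of the nonlinear commutator, by contrast, is essentially the mechanism the paper uses.)

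The paper bypasses $J$ entirely. It controls the full family of mixed norms $F_h(t)=\int_{\R} x^{2(s-h)}|\partial_x^h u|^2\,dx$, $h=0,\dots,s$, by outer induction on $s$ and backward induction on $h$. Differentiating $F_{\bar h-1}$ in time and integrating by parts once produces a principal term bounded by $\langle t\rangle^{s-\bar h}\sqrt{F_{\bar h-1}}$ (via the already-established bound on $F_{\bar h}$); the potential contribution involves $|x|^{s-\bar h+1}\,|\partial_x^{\ge 1}V|\lesssim|x|^{s-\bar h}$, which reduces to a previously controlled mixed norm with \emph{no} factor of $t$ generated; the nonlinear contribution is handled by \eqref{decay}. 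The resulting inequality $|\tfrac{d}{dt}F_{\bar h-1}|\lesssim\langle t\rangle^{s-\bar h}\sqrt{F_{\bar h-1}} + \langle t\rangle^{-k}F_{\bar h-1}$ is then closed by a tailored Gronwall lemma (Proposition~\ref{gron}).
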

Notice that in Theorem~\ref{mainnonlin}, compared with Theorem~\ref{mainlin}, we assume 
the extra condition \eqref{decay} which is not needed in the linear case.
Nevertheless the conclusion about the moments that we get are the same.
\\
Concerning Theorems \ref{mainlin} and \ref{mainnonlin} we shall provide one unified proof. Indeed it will be clear along the proof that
one can drop the assumption \eqref{decay}
in the linear case $\lambda=0$.
In fact this assumption is needed 
to deal with the nonlinear term, but it is not needed to deal with the linear part.\\
\\

Next we focus on the issue of the precise description
of the moments
of order $s$ for large times.
In order to do that we need another assumption on the potential $V$.
More specifically we assume the asymptotic completeness of the wave operator
in the $L^2$ setting:
\begin{equation}\label{reedsimon}
\forall g\in L^2(\R) \quad \exists\, g_\pm\in L^2(\R) \hbox{ s.t. } \|e^{it(\partial_x^2 + V)} g- 
e^{it\partial_x^2} g_\pm\|_{L^2(\R)}
\overset{t\rightarrow \pm \infty} \rightarrow 0.
\end{equation}
We recall that  \eqref{reedsimon}
has been established for a large class of potentials which satisfy a suitable long-range
decay condition, namely $V$ has to decay of the order $\langle x\rangle^{-1-\epsilon_0}$
with $\epsilon_0>0$
(see  \cite{Ag}, \cite{En}, \cite{RS2}).
\\

Along the paper the following
asymptotic description for free waves will play an important role (see \cite{D} and \cite{RS2}):
\begin{equation}\label{saymptoticbehavior}
\|e^{it\partial_{x}^2} h - \frac {e^{i\frac{x^2}{4t}}}{\sqrt {2i t}} \hat h(\frac {x}{2t})\|_{L^2(\R)}\overset{t\rightarrow \infty}\longrightarrow 0,
\quad \forall\, h\in L^2({\R}),
\end{equation}
where $\hat h(\xi)$ is the (unique) extension of \eqref{fourier} to functions in $L^2(\R)$.
Next we state our result about the long time description of moments of order $s$.
Since in the linear case we can provide
a better result compared with the nonlinear one, we provide two separate statements. \\
\begin{theorem}\label{V=0}
Let $V\in L^\infty(\R)$ be real valued, $\lim_{x\rightarrow \infty} V(x)=0$. Assume moreover
\eqref{boundedreivatives}, \eqref{posit} and \eqref{reedsimon}.
Let $f\in \Sigma_s(\R)$ with $s\geq 1$ an integer,
then 
\begin{equation}\label{momentumpreciseV}
\lim_{t\rightarrow \pm \infty }
\int_{\R} \big(\frac xt\big)^{2s}|e^{it (\partial_x^2 + V)} f|^2 dx= 2^{2s} \|(\sqrt {-\partial_x^2 - V})^s f\|_{L^2}^2  \,.
\end{equation}
\end{theorem}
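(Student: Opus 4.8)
The plan is to compare the perturbed flow $e^{it(\partial_x^2+V)}f$ with a free flow through the wave operator furnished by the asymptotic completeness hypothesis \eqref{reedsimon}, and then to read off the moment asymptotics from the free long--time profile \eqref{saymptoticbehavior}. Write $H=\partial_x^2+V$ and $u(t)=e^{itH}f$. By \eqref{reedsimon} there exist $f_\pm\in L^2(\R)$ with $\|e^{itH}f-e^{it\partial_x^2}f_\pm\|_{L^2}\to 0$ as $t\to\pm\infty$; equivalently $f=W_\pm f_\pm$, where $W_\pm$ is the wave operator intertwining $-\partial_x^2$ and $-H$, which by \eqref{reedsimon} is a unitary on $L^2(\R)$. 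For $|t|\ge 1$ I introduce the finite measures $d\mu_t(\xi):=|u(t,2t\xi)|^2\,2|t|\,d\xi$, namely the pushforward of $|u(t,x)|^2\,dx$ under $x\mapsto\xi=x/(2t)$; each $\mu_t$ has total mass $\|f\|_{L^2}^2$ by $L^2$-conservation. Since $(x/t)^{2s}=2^{2s}\xi^{2s}$ after this substitution, one has $\int_\R(x/t)^{2s}|u(t,x)|^2\,dx=2^{2s}\int_\R\xi^{2s}\,d\mu_t(\xi)$, so the theorem reduces to identifying the limit of the $2s$-th moment of $\mu_t$.

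First I identify the weak limit of $\mu_t$. For $\phi\in C_b(\R)$, boundedness of $\phi$ together with $\|e^{itH}f-e^{it\partial_x^2}f_\pm\|_{L^2}\to 0$ lets me replace $u(t)$ by $e^{it\partial_x^2}f_\pm$ in $\int\phi\,d\mu_t$ up to an $o(1)$ error; then the free asymptotic \eqref{saymptoticbehavior} replaces $e^{it\partial_x^2}f_\pm$ by $(2it)^{-1/2}e^{ix^2/4t}\hat f_\pm(x/2t)$, again with $L^2$-error $o(1)$. After the change of variables $\xi=x/(2t)$ the phase has modulus one and the Jacobian cancels the factor $(2|t|)^{-1}$, leaving $\int_\R\phi(\xi)|\hat f_\pm(\xi)|^2\,d\xi$. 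Hence $\mu_t\rightharpoonup\mu_\infty$ with $d\mu_\infty=|\hat f_\pm|^2\,d\xi$.

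The main obstacle is that the weight $\xi^{2s}$ is unbounded, so weak convergence alone does not yield convergence of the $2s$-th moments; the remedy is a uniform-integrability bound extracted from Theorem \ref{mainlin}. For $f\in\Sigma_{s+1}(\R)$ I apply \eqref{momentssV} at order $s+1$ (in its homogeneous form, legitimate by linearity) to get $\int_\R x^{2s+2}|u(t,x)|^2\,dx\lesssim\langle t\rangle^{2s+2}$, whence $\int_\R\xi^{2s+2}\,d\mu_t=(2|t|)^{-(2s+2)}\int_\R x^{2s+2}|u|^2\,dx\lesssim 1$ for $|t|\ge 1$. A uniform bound on a moment strictly higher than $2s$ forces uniform integrability of $\xi^{2s}$ against $\{\mu_t\}$, and combined with $\mu_t\rightharpoonup\mu_\infty$ it gives $\int\xi^{2s}\,d\mu_t\to\int\xi^{2s}\,d\mu_\infty=\||\xi|^s\hat f_\pm\|_{L^2}^2=\|\partial_x^s f_\pm\|_{L^2}^2$. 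To pass from $\Sigma_{s+1}$ to a general $f\in\Sigma_s$ I use density of $\Sigma_{s+1}$ in $\Sigma_s$ together with uniform-in-$t$ Lipschitz bounds: by \eqref{momentssV}, $\big|\,\|(x/t)^s u(t)\|_{L^2}-\|(x/t)^s \tilde u(t)\|_{L^2}\big|\lesssim\|f-\tilde f\|_{\Sigma_s}$ for $|t|\ge1$, while $\big|\,\|(\sqrt{-H})^s f\|_{L^2}-\|(\sqrt{-H})^s\tilde f\|_{L^2}\,\big|\lesssim\|f-\tilde f\|_{H^s}\le\|f-\tilde f\|_{\Sigma_s}$ because the perturbed Sobolev norm is comparable to the standard one; a routine $3\varepsilon$-argument then promotes the limit to all of $\Sigma_s$.

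It remains to replace $\|\partial_x^s f_\pm\|_{L^2}^2$ by the intrinsic quantity $\|(\sqrt{-\partial_x^2-V})^s f\|_{L^2}^2$. Here I invoke the intertwining property $W_\pm\,\psi(-\partial_x^2)=\psi(-H)\,W_\pm$ for Borel $\psi$, with $\psi(\lambda)=\lambda^{s/2}$: since $f=W_\pm f_\pm$ this gives $(\sqrt{-H})^s f=W_\pm\,|\partial_x|^s f_\pm$, and as $W_\pm$ is an $L^2$-isometry, $\|(\sqrt{-H})^s f\|_{L^2}=\||\partial_x|^s f_\pm\|_{L^2}=\|\partial_x^s f_\pm\|_{L^2}$. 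This step uses \eqref{posit} to ensure $-\partial_x^2-V\ge0$, so that $(\sqrt{-H})^s=(-H)^{s/2}$ is a well-defined self-adjoint operator to which the functional calculus applies, and it uses $\lim_{x\to\infty}V=0$ to guarantee that the free dynamics $e^{it\partial_x^2}$ is the correct comparison flow in \eqref{reedsimon}. Collecting the pieces, $\int_\R(x/t)^{2s}|u(t,x)|^2\,dx=2^{2s}\int\xi^{2s}\,d\mu_t\to 2^{2s}\|(\sqrt{-\partial_x^2-V})^s f\|_{L^2}^2$, which is \eqref{momentumpreciseV}; the limits as $t\to+\infty$ and $t\to-\infty$ agree since $W_+$ and $W_-$ are both isometries.
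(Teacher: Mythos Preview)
Your overall architecture---push forward $|u(t,x)|^2dx$ to a probability-type measure $\mu_t$ in the variable $\xi=x/(2t)$, identify the weak limit via \eqref{reedsimon} and \eqref{saymptoticbehavior}, then upgrade to convergence of the $2s$-th moment by uniform integrability, and finally identify $\|\partial_x^s f_\pm\|_{L^2}$ with $\|(\sqrt{-\partial_x^2-V})^s f\|_{L^2}$ through the wave-operator intertwining---is a clean reformulation of what the paper does. Your last step, via the intertwining $\Omega_\pm\psi(-\partial_x^2-V)=\psi(-\partial_x^2)\Omega_\pm$ and the fact that $\Omega_\pm$ is an $L^2$-isometry (which is exactly what \eqref{reedsimon} gives), is an elegant abstract substitute for the paper's hands-on Proposition~\ref{thmcorg}.

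There is, however, a genuine gap in the uniform-integrability step. You obtain it, for $f\in\Sigma_{s+1}$, by invoking Theorem~\ref{mainlin} at order $s+1$ to bound $\int x^{2s+2}|u(t)|^2dx\lesssim\langle t\rangle^{2s+2}$. But Theorem~\ref{mainlin} at order $s+1$ requires the hypothesis \eqref{boundedreivatives} at order $s+1$, i.e.\ $|\partial_x^{s+1}V|\lesssim(1+|x|)^{-1}$, and its proof also needs $\partial_x^{s+1}V\in L^\infty$ (through the equivalence of $H^{s+1}$ and $H^{s+1}_V$ in Proposition~\ref{danmig}, which launches the backward induction at $h=s+1$). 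The statement of Theorem~\ref{V=0} only assumes \eqref{boundedreivatives} for $j=1,\dots,s$, so you are using one more derivative of $V$ than you have. Your density argument from $\Sigma_{s+1}$ to $\Sigma_s$ is fine, but it presupposes the conclusion on the dense subclass, and that is precisely where the unjustified hypothesis enters.

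This is exactly the difficulty the paper's Proposition~\ref{part} is designed to overcome: it establishes the tightness
\[
\lim_{R\to\infty}\ \limsup_{t\to\infty}\ \int_{|x|>Rt}\Big(\frac{x}{t}\Big)^{2s}|u(t,x)|^2\,dx=0
\]
directly at order $s$, by a backward induction on $h$ that feeds only on the order-$s$ bounds \eqref{induct1} already available from Theorem~\ref{mainlin}. In your language, Proposition~\ref{part} is the uniform-integrability statement for $\xi^{2s}$ against $\{\mu_t\}$, obtained without ever touching $\Sigma_{s+1}$ or an $(s{+}1)$-st derivative of $V$. If you replace your higher-moment shortcut by this argument (or reprove it in your measure-theoretic framework), the rest of your proof goes through; alternatively, your proof as written establishes the theorem under the slightly stronger hypothesis that \eqref{boundedreivatives} holds up to order $s+1$.
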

Notice that \eqref{momentumpreciseV}
is the perturbative counterpart of \eqref{PR2} under a linear potential pertubation.
\\

Finally we switch to the nonlinear case (namely \eqref{NLS} with $\lambda< 0$)
where, exactly as in Theorem \ref{mainnonlin}, we assume the extra assumption \eqref{decay}.
Moreover despite to Theorem \ref{V=0} we get a slightly weaker conclusion, indeed 
on the r.h.s. in \eqref{momentumprecise} we have the classical Sobolev norm of the scattering states $f_\pm$, while  
in the r.h.s. in \eqref{momentumpreciseV} it is involved the pertubed Sobolev norm of the initial datum $f$ himself.

\begin{theorem}\label{main} 
Assume that $V\in L^\infty(\R)$ is real valued and satisfies \eqref{boundedreivatives}, 
\eqref{posit}, \eqref{reedsimon}. Let
$\lambda< 0$ and $k\geq 2$ be an integer. Let $f\in \Sigma_s(\R)$ with $s\geq 1$ an integer, and assume that the unique global solution $u(t,x)
\in \mathcal(\R;H^1(\R))$ to \eqref{NLS}
satisfies \eqref{decay}. Then
$u(t,x)\in {\mathcal C}(\R;H^s(\R))$
and there exist $f_\pm \in H^s(\R)$ such that
\begin{equation}\label{alglin}
\lim_{t\rightarrow \pm \infty }\|u(t,x)-e^{it\partial_{x}^2} f_\pm\|_{H^s(\R)}=0.
\end{equation}
Moreover 
\begin{equation}\label{momentumprecise}
\lim_{t\rightarrow \pm \infty }
\int_{\R} \big(\frac xt\big)^{2s}|u(t,x)|^2 dx= 2^{2s} \int_{\R} \big|\partial_x^s f_\pm\big|^2 dx  \,.
\end{equation}
\end{theorem}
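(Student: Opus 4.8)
The plan is to reduce the nonlinear statement to the free asymptotics: first establish $H^s$-scattering to the free flow, and then transfer the \emph{weighted} information through the explicit profile \eqref{saymptoticbehavior}. As a preliminary I would upgrade the Sobolev part of \eqref{momentss} to a uniform bound, $\sup_{t}\|u(t)\|_{H^s}<\infty$. To this end I would not differentiate the flat $H^s$ norm but the perturbed one $\|(1-\partial_x^2-V)^{s/2}\cdot\|_{L^2}$, which under \eqref{posit}, \eqref{boundedreivatives} and $V\in L^\infty$ is equivalent to the standard $H^s$ norm. Since $-\partial_x^2-V$ is self-adjoint and commutes with this norm, the linear part of the equation contributes nothing and only the nonlinearity survives; a Moser estimate together with \eqref{decay} gives $\frac{d}{dt}\|(1-\partial_x^2-V)^{s/2}u\|_{L^2}^2\lesssim\|u\|_{L^\infty}^{2k}\|u\|_{H^s}^2\lesssim\langle t\rangle^{-k}\|u\|_{H^s}^2$. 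As $k\geq2$ the right-hand side is integrable in time, and Gronwall yields the uniform bound, whence also $u\in\mathcal C(\R;H^s)$.

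Next I would prove \eqref{alglin}. Writing Duhamel's formula with respect to the perturbed propagator $e^{it(\partial_x^2+V)}$, the inhomogeneous term is purely the nonlinearity; since $e^{-i\tau(\partial_x^2+V)}$ is bounded on $H^s$ (again by the norm equivalence) and $\||u|^{2k}u\|_{H^s}\lesssim\|u\|_{L^\infty}^{2k}\|u\|_{H^s}\lesssim\langle\tau\rangle^{-k}$ is integrable, the integral $\int_0^{\pm\infty}e^{-i\tau(\partial_x^2+V)}(|u|^{2k}u)\,d\tau$ converges in $H^s$ and defines $\tilde f_\pm\in H^s$ with $\|u(t)-e^{it(\partial_x^2+V)}\tilde f_\pm\|_{H^s}\to0$. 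I would then pass from the perturbed to the free flow via \eqref{reedsimon}: the associated wave operators intertwine $\partial_x^2+V$ with $\partial_x^2$ and, for the potentials considered, are bounded on $H^s$, so $\tilde f_\pm=W_\pm f_\pm$ with $f_\pm\in H^s$ and $e^{it(\partial_x^2+V)}\tilde f_\pm=W_\pm e^{it\partial_x^2}f_\pm$. Boundedness of $W_\pm$ on $H^s$, combined with the $L^2$ convergence in \eqref{reedsimon} interpolated against the uniform $H^s$ bound, gives $\|(W_\pm-\mathrm{Id})e^{it\partial_x^2}f_\pm\|_{H^s}\to0$, which is exactly \eqref{alglin}.

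For \eqref{momentumprecise} I would combine the $L^2$ version of \eqref{alglin} with \eqref{saymptoticbehavior} to get $\|u(t)-U(t)\|_{L^2}\to0$, where $U(t,x)=\frac{e^{ix^2/4t}}{\sqrt{2it}}\hat f_\pm(\tfrac{x}{2t})$, and a direct change of variables shows $\int_\R(\tfrac xt)^{2s}|U(t,x)|^2\,dx=2^{2s}\|\partial_x^sf_\pm\|_{L^2}^2$ for every $t$. It then remains to prove $\int_\R(\tfrac xt)^{2s}\big(|u|^2-|U|^2\big)\,dx\to0$. By Cauchy--Schwarz this is bounded by $\|(\tfrac xt)^s(u-U)\|_{L^2}\,\|(\tfrac xt)^s(u+U)\|_{L^2}$, whose second factor is uniformly bounded thanks to \eqref{momentss} and the identity for $U$; the crux is the \emph{weighted} smallness $\|(\tfrac xt)^s(u-U)\|_{L^2}\to0$.

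This weighted control is the main obstacle: \eqref{reedsimon} and \eqref{saymptoticbehavior} only furnish unweighted $L^2$ smallness, and the moment is genuinely discontinuous under $H^s$-scattering, since $f_\pm$ need not lie in $\Sigma_s(\R)$ and then $e^{it\partial_x^2}f_\pm$ typically has infinite moments. I would resolve it by interpolation. Assuming first $f\in\Sigma_{s+1}(\R)$, Theorem~\ref{mainnonlin} at level $s+1$ together with $f_\pm\in H^{s+1}$ gives the uniform bound $\|(\tfrac xt)^{s+1}(u-U)\|_{L^2}\lesssim1$; interpolating this against $\|u-U\|_{L^2}\to0$ with exponent $\tfrac{s}{s+1}$ yields $\|(\tfrac xt)^s(u-U)\|_{L^2}\to0$, hence \eqref{momentumprecise} for such data. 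For general $f\in\Sigma_s(\R)$ I would approximate it in $\Sigma_s(\R)$ by data in $\Sigma_{s+1}(\R)$ and exchange the limits in $t$ and in the approximation, the exchange being justified by the $t$-uniform continuity of the left-hand side provided by \eqref{momentss} and by the continuity of the scattering map $f\mapsto f_\pm$ from $\Sigma_s(\R)$ into $H^s(\R)$.
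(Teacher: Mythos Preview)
Your treatment of the uniform $H^s$ bound and of \eqref{alglin} matches the paper's: equivalence of $H^s$ and $H^s_V$, a Duhamel/Cauchy argument against the perturbed flow, then an upgrade of \eqref{reedsimon} to $H^s$ (the paper does this by density in Proposition~\ref{upgrades}; a single interpolation of $L^2$ convergence against a uniform $H^s$ bound would only give $H^{s-\epsilon}$).

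The gap is in your route to \eqref{momentumprecise}. The density step does not go through under the stated hypotheses: \eqref{decay} is assumed only for the particular solution $u$, and nothing in the theorem guarantees that the approximating solutions $u_n$ with data $f_n\in\Sigma_{s+1}$ satisfy \eqref{decay} (let alone with a uniform constant), so you cannot apply Theorem~\ref{mainnonlin} at any level to them. Even granting that, the ``$t$-uniform continuity'' you invoke is not what \eqref{momentss} provides---\eqref{momentss} is an a~priori bound on a single solution, not a continuity estimate for $f\mapsto\int(\tfrac{x}{t})^{2s}|u(t)|^2\,dx$, and proving such continuity uniformly in $t$ would require weighted estimates on the nonlinear difference $u_n-u$, which again needs \eqref{decay} for $u_n$. (A milder point: invoking Theorem~\ref{mainnonlin} at level $s+1$ also needs \eqref{boundedreivatives} up to order $s+1$, one derivative beyond the hypothesis.) The paper avoids approximation entirely: for the given $u$ it proves the tail estimate $\lim_{R\to\infty}\limsup_{t\to\pm\infty}\int_{|x|>Rt}(\tfrac{x}{t})^{2s}|u|^2\,dx=0$ (Proposition~\ref{part}), via a backward induction in the number of derivatives that recycles the mixed bounds \eqref{induct1}; one then splits the integral at $|x|=Rt$, controls the outer part by this tail estimate, and on the inner part the weight is bounded by $R^{2s}$, so the unweighted $L^2$ convergence of $u(t)$ to the explicit profile $\frac{e^{ix^2/4t}}{\sqrt{2it}}\hat f_\pm(\tfrac{x}{2t})$ suffices.
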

Several remarks in order are listed below.
\begin{remark}\label{carlino}
In the case $V=0$,  Theorem~\ref{main} can be obtained by using the vector field $J=x+2it\partial_x$, whose main property is
$[J,i\partial_t +\partial_x^2]=0$. Indeed one can get the following bound for solutions to \eqref{NLS} with $V\equiv 0$,
$\lambda\leq 0$:
$$\sup_t \|J^s u(t,x)\|_{L^2(\R)}<\infty$$ (see \cite{carles}).
In turn this estimate along with \eqref{alglin} and
the following identity (that follows by developing in power of $t$ the operator $J^s$)
$$\frac {J^s u(t,x)}{t^s}=\frac{x^s}{t^s} u(t,x) + (2i)^s \partial_x^s u(t,x)+o(1) 
\hbox{ as } t\rightarrow \infty$$ implies \eqref{momentumprecise} for 
the translation invariant defocusing NLS. 
\\
However the argument that we propose in this paper in order to prove \eqref{momentumprecise} is more flexible, it is not based on the commuting vector fields technique, and in particular it works in the more general no translation invariant setting. 
\end{remark}
\begin{remark}
As we shall see, 
the proof of \eqref{alglin} is straightforward  under
the assumption \eqref{decay}. 
There is a huge literature devoted to the proof
of \eqref{alglin} without assuming a priori the strong time decay \eqref{decay} on the solution 
neither  the space decay assumption $|x|^s f\in L^2(\R)$ on the initial datum. In this direction we recall 
\cite{N} in the case $V=0$ and $k>2$. 
Different proofs can be found in \cite{PV}, \cite{TV}
\cite{V} via interaction Morawetz estimates.
The critical case $k=2$ and $V=0$ has been studied in \cite{D}. 
Concerning the case of defocusing NLS with a potential $V\neq 0$ and $k>2$,
the scattering in Sobolev spaces has been established in several papers, in between we quote  \cite{BV}, \cite{FV}, 
\cite{L} and the references therein.
\end{remark}
\begin{remark}
It is worth mentioning that the proof of \eqref{momentumprecise}
follows, at least formally, by \eqref{PR2} and \eqref{alglin}. However the argument needed to get \eqref{momentumprecise} is more subtle
since $L^2$ weighted integrals are involved on the l.h.s., 
despite to the convergence established in \eqref{alglin} which occurs in Sobolev space $H^s(\R)$ only. In fact at the best of our knowledge it is not clear whether or not 
the scattering states $f_\pm$ that appear in \eqref{alglin} belong to $\Sigma_s(\R)$, and as a result we do not know if
$u(t,x)-e^{it\partial_{x}^2} f_\pm\in \Sigma_s(\R)$.

\end{remark}

\begin{remark}
Notice that as a consequence of Theorem \ref{main}
one can reconstruct the $H^s(\R)$ norm of the scattering state by using $L^2$ weighted norms in physical space. This is reminiscent of the identities proved
in \cite{VV2}, \cite{VV1} where the $H^\frac 12(\R)$ norm 
of the scattering state is obtained as a limit of
space time integrals on cylinders bounded in physical space and infinite in the time variable.
See also \cite{VV} for a generalization to the nonlinear wave equation.
\end{remark}
\section{Equivalence of Sobolev norms and applications}\label{equivsob}
Along this section we collect some results useful in the sequel.
The first result  concerns the equivalence between classical and perturbed $L^2$ based Sobolev spaces. Recall that we assume $-\partial_x^2 - V\geq 0$.  In such a way the operator 
$\sqrt {-\partial_x^2 - V}$ is meaningful. In particular we can give 
the following definitions of perturbed Sobolev spaces
(resp. homogeneous and inhomogeneous):
$$\|f\|_{\dot H^s_V(\R)}^2=\|(\sqrt {-\partial_x^2 - V})^s f\|_{L^2(\R)}^2,$$
$$\|f\|_{ H^s_V(\R)}^2=\|(\sqrt {-\partial_x^2 - V})^s f\|_{L^2(\R)}^2
+\|f\|_{L^2(\R)}^2.$$

\begin{prop}\label{danmig}
Let  $s>0$ be an  integer and $V$ be a real valued potential 
that satisfies \eqref{posit} and moreover
$\partial_x^{j} V\in L^\infty(\R)$ for $j=0,\cdots ,s$.
Then there exist $c, C>0$ such that:
\begin{equation}\label{equivalence}
c \|f\|_{H^s_V(\R)} \leq \|f\|_{H^s(\R)}\leq C \|f\|_{H^s_V(\R)}.
\end{equation}
\end{prop}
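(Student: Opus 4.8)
The plan is to compare, rather than $-\partial_x^2-V$ and $-\partial_x^2$ directly, the two strictly positive self-adjoint operators
$$A:=-\partial_x^2-V+1,\qquad B:=-\partial_x^2+1=\langle\partial_x\rangle^2,$$
both bounded below by $1$ thanks to \eqref{posit}. Since $B^{s/2}=\langle\partial_x\rangle^{s}$, the quantity $\|B^{s/2}f\|_{L^2}$ is an equivalent realization of the $H^s(\R)$ norm, while the spectral theorem applied to $H:=-\partial_x^2-V\geq 0$ gives $(t+1)^{s/2}\simeq t^{s/2}+1$ on $[0,\infty)$ and hence $\|A^{s/2}f\|_{L^2}\simeq \|f\|_{H^s_V(\R)}$. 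Thus \eqref{equivalence} is equivalent to the statement that $A^{s/2}$ and $B^{s/2}$ have comparable graph norms, i.e.\ $\mathrm{Dom}(A^{s/2})=\mathrm{Dom}(B^{s/2})$ with equivalence of norms. The point to keep in mind is that, when $s$ is odd, $s/2$ is a half-integer and $A^{s/2}$ is a genuinely nonlocal operator; this is where the argument cannot be purely computational.

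First I would treat integer exponents, showing $\mathrm{Dom}(A^m)=\mathrm{Dom}(B^m)=H^{2m}(\R)$ with equivalent norms for every integer $m\geq 1$. Expanding the product, $A^m-B^m$ is a differential operator of order at most $2m-2$ whose coefficients are universal polynomials in $V,\partial_x V,\dots,\partial_x^{2m-2}V$; by the hypothesis $\partial_x^jV\in L^\infty(\R)$ for $j\leq s$ these coefficients are bounded, so $\|(A^m-B^m)f\|_{L^2}\lesssim \|f\|_{H^{2m-2}(\R)}$. A Gagliardo--Nirenberg interpolation bounds the latter by $\varepsilon\|f\|_{H^{2m}}+C_\varepsilon\|f\|_{L^2}$, and since $\|B^mf\|_{L^2}\simeq\|f\|_{H^{2m}}$ while $\|f\|_{L^2}\leq\|A^mf\|_{L^2}$ (because $A\geq 1$), a standard absorption argument yields $\|A^mf\|_{L^2}\simeq\|B^mf\|_{L^2}$. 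This already settles the case of even $s=2m$.

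It remains to pass from integer powers to the half-integer power $s/2$ when $s$ is odd, which I regard as the main (and only non-routine) obstacle. Here I would invoke operator interpolation: $A$ and $B$ are positive self-adjoint, hence admit bounded imaginary powers ($A^{it},B^{it}$ are unitary), so the complex interpolation spaces between $L^2(\R)$ and the integer-power domains are exactly the fractional-power domains, $[L^2,\mathrm{Dom}(A^{s})]_{1/2}=\mathrm{Dom}(A^{s/2})$ and likewise for $B$. Applying the integer-exponent result already proven with $m=s$ (so that $\mathrm{Dom}(A^s)=\mathrm{Dom}(B^s)=H^{2s}$ with equivalent norms) and interpolating at $\theta=1/2$ gives $\mathrm{Dom}(A^{s/2})=\mathrm{Dom}(B^{s/2})$ with equivalent norms, which by the reductions of the first paragraph is precisely \eqref{equivalence}. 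Alternatively, for the single square root one may avoid interpolation and use the Heinz--Kato inequality together with the integral representation $A^{1/2}=\frac{1}{\pi}\int_0^\infty \mu^{-1/2}A(A+\mu)^{-1}\,d\mu$, comparing the resolvents of $A$ and $B$.
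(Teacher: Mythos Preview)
Your overall strategy is sound, but there is a genuine regularity gap in the odd-$s$ step as written. When you expand $A^m-B^m$ as a differential operator, its coefficients involve derivatives of $V$ up to order $2m-2$ (e.g.\ the term $B^{m-1}V$ contributes $\partial_x^{2m-2}V$ via Leibniz). The hypothesis of the proposition, however, only guarantees $\partial_x^jV\in L^\infty(\R)$ for $j\le s$. Hence your claim that $\mathrm{Dom}(A^m)=H^{2m}$ with equivalent norms is only established for $2m-2\le s$, i.e.\ for $m\le \lfloor s/2\rfloor+1$. When $s$ is odd and you then invoke the integer result with $m=s$ in order to interpolate $[L^2,\mathrm{Dom}(A^s)]_{1/2}$, you are using $2s-2$ derivatives of $V$, which exceeds the assumed regularity as soon as $s\ge 3$. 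The repair is immediate: for $s=2m+1$ interpolate instead between $\mathrm{Dom}(A^{m})=H^{2m}$ and $\mathrm{Dom}(A^{m+1})=H^{2m+2}$ at $\theta=1/2$; both endpoints are covered since $2(m+1)-2=s-1\le s$, and $[\mathrm{Dom}(A^m),\mathrm{Dom}(A^{m+1})]_{1/2}=\mathrm{Dom}(A^{s/2})$ by the same bounded-imaginary-powers argument.

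It is worth noting that the paper's own proof avoids interpolation altogether by a device you overlooked: since $\sqrt{-\partial_x^2-V}$ is self-adjoint, one has
\[
\|(\sqrt{-\partial_x^2-V})^{\,s}f\|_{L^2}^2=\big((-\partial_x^2-V)^sf,f\big)_{L^2},
\]
so even for odd $s$ the half-integer power never has to be analyzed as an operator --- only the \emph{quadratic form} associated to the integer power $(-\partial_x^2-V)^s$. Integrating by parts to balance roughly $s$ derivatives on $f$ and $s$ on $\bar f$, one gets $\int|\partial_x^sf|^2$ plus lower-order terms $\int W_{j,l}\,\partial_x^jf\,\partial_x^l\bar f$ with $j+l<2s$, where the $W_{j,l}$ involve at most $s$ derivatives of $V$. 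This makes the argument entirely elementary and stays precisely within the assumed regularity, whereas your route --- once the endpoint is corrected --- buys generality (it would handle non-integer $s$ as well) at the cost of invoking the interpolation machinery.
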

\begin{proof}
We shall prove that
\begin{multline}\label{equivhard}
\|\partial_x ^s f\|_{L^2(\R)}^2+\|f\|_{L^2(\R)}^2
\\
\leq 2\big( (\sqrt {-\partial_x^2 - V})^s f, (\sqrt {-\partial_x^2 - V})^s f\big)_{L^2(\R)}+M \|f\|_{L^2(\R)}^2
\end{multline}
and
\begin{multline}\label{equiveasy}
\big((\sqrt {-\partial_x^2 - V})^s f, (\sqrt {-\partial_x^2 - V})^s f\big)_{L^2(\R)}+\|f\|_{L^2(\R)}^2\\
\leq M \|\partial_x ^s f\|_{L^2(\R)}^2+ M \|f\|_{L^2(\R)}^2
\end{multline}
for a suitable $M>0$ large enough.  

First we notice that for every $j, l\in \{0,\dots, s\}$ such that $j+l<2s$ 
and for every $W\in L^\infty(\R)$ the following holds: 
for every $K>0$ there exists $\epsilon(K)$ such 
that $\epsilon(K)\overset{K\rightarrow \infty} \rightarrow 0$ and moreover
\begin{equation}\label{KcK}
|\int_{\R} W\partial_x^j f (x)\partial_x^l \bar f (x) dx|
\lesssim  K\|f\|_{L^2({\R})}^2 + \epsilon(K)\|\partial_x^s f\|_{L^2({\R})}^2.
\end{equation}
In fact by the Cauchy-Schwartz inequality and interpolation there exist $\theta_1, \theta_2\in [0,1]$
such that
\begin{multline*}
|\int_{\R} W\partial_x^j f(x) \partial_x^l \bar f(x) dx|
\lesssim  \|\partial_x^j f\|_{L^2({\R})}
\|\partial_x^l f\|_{L^2({\R})}
\\\lesssim  \|f\|_{L^2({\R})}^{\theta_1} \|\partial_x^s f\|_{L^2({\R})}^{1-\theta_1}
\|f\|_{L^2(\R)}^{\theta_2} \|\partial_x^s f\|_{L^2({\R})}^{1-\theta_2}
\end{multline*}
then we conclude by the Young inequality and by noticing that, since $j+l<2s$ then
necessarily $\min\{\theta_1, \theta_2\}<1$.\\

Next, we sketch the proof of  \eqref{equiveasy}. 
By developing the operator $(\sqrt {-\partial_x^2 - V})^s$ 
and by integration by parts  we get
\begin{multline}\label{equivint}
\big((\sqrt {-\partial_x^2 - V})^s f, (\sqrt {-\partial_x^2 - V})^s f\big )_{L^2({\R})}=
\int_{\R} (-\partial_x^2 - V)^{s} f \bar f dx\\= \int_{\R} |\partial_x^s f|^2 dx
+ \sum_{\substack{l,j\in {0, \cdots, s} \\j+l<2s}} \int_{\R} W_{j,l}\partial_x^j f \partial_x^l \bar f dx\,
\end{multline}
where $W_{j,l}$ are linear combination of product of derivatives of $V$ of order at most $s$.
In particular by assumption we have $W_{j,l}\in L^\infty$. 
It is easy now to get \eqref{equiveasy} by recalling \eqref{KcK}.

Concerning the proof of   \eqref{equivhard} we notice that again by \eqref{equivint}
and \eqref{KcK}
we get the existence of $C>0$ such that
$$\big((\sqrt {-\partial_x^2 - V})^s f, (\sqrt {-\partial_x^2 - V})^s f\big )_{L^2({\R})}$$
 $$\geq \int_{\R} |\partial_x^s f|^2 dx - \frac 12 \|\partial_x^s f\|_{L^2({\R})}^2
-C \|f\|_{L^2({\R})}^2$$
and we easily conclude.
\end{proof}
As an application of Proposition \ref{danmig} we up-grade the $L^2$ asymptotic completeness assumption to $H^s$ for any $s$.
\begin{prop}\label{upgrades} Let $s\geq 0$ be an integer and $V$ be a potential that satisfies
the same assumptions as in Proposition~\ref{danmig}
and the condition \eqref{reedsimon}.
Then the following is true:
\begin{equation}\label{reedsimons}
\forall g\in H^s(\R) \quad \exists g_\pm \in H^s(\R) \hbox{ s.t. } \|e^{it(\partial_x^2 + V)} g- 
e^{it\partial_x^2} g_\pm\|_{H^s(\R)}
\overset{t\rightarrow \pm \infty} \longrightarrow 0.
\end{equation}
\end{prop}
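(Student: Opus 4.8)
The plan is to transfer the $L^2$ asymptotic completeness \eqref{reedsimon} up to $H^s$ by playing the equivalence of norms of Proposition \ref{danmig} against the intertwining properties of the Schr\"odinger flows. The key structural observation is that, writing $A:=\sqrt{-\partial_x^2-V}$, the perturbed operator $A^s=(\sqrt{-\partial_x^2-V})^s$ commutes with the perturbed propagator $e^{it(\partial_x^2+V)}$, since both are functions of the self-adjoint operator $-\partial_x^2-V$, while $|\partial_x|^s$ commutes with the free propagator $e^{it\partial_x^2}$. By Proposition \ref{danmig} the norm $\|\cdot\|_{H^s(\R)}$ is equivalent to $\|\cdot\|_{L^2(\R)}+\|A^s\cdot\|_{L^2(\R)}$, so it is natural to measure the difference of the two flows in this equivalent norm. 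Fix $g\in H^s(\R)$ and let $g_\pm\in L^2(\R)$ be the $L^2$ scattering states furnished by \eqref{reedsimon}.

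I would then apply \eqref{reedsimon} a second time, to $\phi:=A^s g\in L^2(\R)$, obtaining scattering data $\phi_\pm$. The relation \eqref{reedsimon} exactly says that the strong limit $\Omega_\pm:=\textrm{s-}\lim_{t\to\pm\infty}e^{-it\partial_x^2}e^{it(\partial_x^2+V)}$ exists everywhere on $L^2(\R)$ and is isometric, with $g_\pm=\Omega_\pm g$ and $\phi_\pm=\Omega_\pm\phi$. The wave operator $\Omega_\pm$ intertwines the two generators, $\Omega_\pm f(-\partial_x^2-V)=f(-\partial_x^2)\Omega_\pm$ for bounded Borel $f$, and taking $f(\lambda)=\lambda^{s/2}$ gives $\Omega_\pm A^s=|\partial_x|^s\Omega_\pm$. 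Hence $\phi_\pm=\Omega_\pm A^s g=|\partial_x|^s\Omega_\pm g=|\partial_x|^s g_\pm$. Since $\phi_\pm\in L^2(\R)$, this already identifies $g_\pm\in\dot H^s(\R)$, and together with $g_\pm\in L^2(\R)$ it yields $g_\pm\in H^s(\R)$, settling the membership part of the statement.

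It remains to upgrade the convergence. Writing $w(t):=e^{it(\partial_x^2+V)}g-e^{it\partial_x^2}g_\pm$, the $L^2$ part of $\|w(t)\|_{H^s}$ tends to $0$ by \eqref{reedsimon}. For the top-order part I would commute $A^s$ through the perturbed flow and insert the second scattering limit: $A^s e^{it(\partial_x^2+V)}g=e^{it(\partial_x^2+V)}\phi\to e^{it\partial_x^2}\phi_\pm=|\partial_x|^s e^{it\partial_x^2}g_\pm$ in $L^2(\R)$. Therefore $A^s w(t)=o(1)+\big(|\partial_x|^s-A^s\big)e^{it\partial_x^2}g_\pm$, and the whole theorem reduces to the single limit
$$\big\|\big(|\partial_x|^s-(\sqrt{-\partial_x^2-V})^s\big)\,e^{it\partial_x^2}g_\pm\big\|_{L^2(\R)}\longrightarrow 0\quad\text{as }t\to\pm\infty.$$
This is where I expect the main difficulty to lie. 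The operator $D:=|\partial_x|^s-(\sqrt{-\partial_x^2-V})^s$ is of lower order than $|\partial_x|^s$ and is governed entirely by $V$ and its derivatives: in the transparent even case $s=2m$ it is an explicit differential operator of order $\le 2m-1$ whose coefficients are polynomials in $V,\partial_x V,\dots,\partial_x^{2m-1}V$, all of which decay at spatial infinity in the long-range regime underlying \eqref{reedsimon} (cf. \eqref{boundedreivatives}). On the other hand, by the free asymptotics \eqref{saymptoticbehavior} the wave $e^{it\partial_x^2}g_\pm$ disperses and loses its mass on every fixed ball as $t\to\pm\infty$. Pairing the spatial decay of the coefficients of $D$ against this local dispersive decay of the free profile produces the claimed limit; the general integer $s$ can be reduced to the differential case by functional calculus, and a density reduction on $g_\pm$ — legitimate because $\Omega_\pm$ is bounded on $H^s(\R)$, so the family $w(t)$ is uniformly bounded in $H^s(\R)$ — allows one to carry out the estimate for smooth, well-localized profiles and pass to the limit.
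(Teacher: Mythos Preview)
Your intertwining argument cleanly shows $g_\pm\in H^s(\R)$ and correctly reduces the $H^s$ convergence to the single limit $\|(|\partial_x|^s-A^s)e^{it\partial_x^2}g_\pm\|_{L^2}\to 0$. You rightly flag this as the crux, but the resolution you sketch does not go through under the stated hypotheses. You assert that the coefficients of $D:=|\partial_x|^s-A^s$ decay at infinity, citing \eqref{boundedreivatives}; but Proposition~\ref{upgrades} only assumes the hypotheses of Proposition~\ref{danmig} --- that is, $\partial_x^jV\in L^\infty$ for $0\le j\le s$ and \eqref{posit} --- together with \eqref{reedsimon}. No decay of $V$ is assumed, and \eqref{boundedreivatives} (which in any case only controls $\partial_x^jV$ for $j\ge 1$, not $V$ itself) is not among the hypotheses here. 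Already for $s=2$ one has $D=V$ exactly, and $\|V\,e^{it\partial_x^2}h\|_{L^2}$ need not vanish for bounded, non-decaying $V$, even when $h$ is Schwartz: local $L^2$ decay of the free wave is useless against a coefficient that does not fall off. Your parenthetical that decay is implicit in ``the long-range regime underlying \eqref{reedsimon}'' is a heuristic about how asymptotic completeness is typically \emph{verified}, not a consequence of the hypothesis as stated. The odd-$s$ case, dismissed with ``by functional calculus'', is also left open: $|\partial_x|-\sqrt{-\partial_x^2-V}$ is genuinely nonlocal and does not reduce to the even analysis.

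The paper sidesteps all of this with a softer argument that needs neither the structure of $D$ nor any decay of $V$. From Proposition~\ref{danmig} and the fact that each flow is an isometry for its own Sobolev scale, the family $e^{-it\partial_x^2}e^{it(\partial_x^2+V)}$ is uniformly bounded on $H^s(\R)$ (this already forces $g_\pm\in H^s$). For $g$ in the dense subspace $H^{s+1}(\R)$ one has the same uniform bound one level up, and interpolating the $L^2$ convergence from \eqref{reedsimon} against this $H^{s+1}$ bound yields convergence in $H^s$. A uniformly bounded family of operators converging pointwise on a dense set converges everywhere, which closes the argument with no parity distinction and no dispersive input.
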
 
\begin{proof}
Notice that if $s=0$ then \eqref{reedsimons} it is equivalent to \eqref{reedsimon}.
In the case $s>0$
notice that $\sup_t \|e^{-it\partial_x^2}\circ e^{it(\partial_x^2 + V)} g\|_{H^s(\R)}<\infty$
due to \eqref{equivalence} and to the fact that 
$e^{it(\partial_x^2 + V)}$ and $e^{-it\partial_x^2}$ are isometries respectively in 
$H^s_V(\R)$ and $H^s(\R)$.
Then it is easy to deduce by \eqref{reedsimon} that the functions $g_\pm$, that in principle belong to $L^2(\R)$,
indeed belong to $H^s(\R)$. Next notice that
if moreover we assume $g\in H^{s+1}(\R)$ then, following the argument as above, there exist
$g_\pm\in H^{s+1}(\R)$ such that
$\|e^{it(\partial_x^2 + V)} g- 
e^{it\partial_x^2} g_\pm\|_{L^2(\R)}
\overset{t\rightarrow \pm \infty} \longrightarrow 0$.
By combining this fact with the uniform boundedness of $e^{it(\partial_x^2 + V)} g$
and $e^{it\partial_x^2} g_\pm$ in $H^{s+1}(\R)$ we conclude by interpolation that 
$\|e^{it(\partial_x^2 + V)} g- 
e^{it\partial_x^2} g_\pm\|_{H^s(\R)}
\overset{t\rightarrow \pm \infty} \longrightarrow 0$.
Summarizing we have proved the following fact:
\begin{equation}\label{reedsimonsdense}
\forall g\in H^{s+1}(\R) \quad \exists g_\pm \in H^{s+1}({\R}) \hbox{ s.t. } \|e^{it(\partial_x^2 + V)} g- 
e^{it\partial_x^2} g_\pm\|_{H^s(\R)}
\overset{t\rightarrow \pm \infty} \longrightarrow 0.
\end{equation}
In particular we have proved  that the one parameter family of operators
$$e^{-it\partial_x^2}\circ e^{it(\partial_x^2 + V)}\in {\mathcal L}(H^s({\R}), H^s({\R}))$$
is uniformly bounded and converges pointwisely on a dense subspace of $H^s({\R})$
(namely $H^{s+1}(\R)$)
as $t\rightarrow \pm \infty$.
By a straightforward density argument we deduce that the same property is true on $H^s({\R})$ and we conclude.
\end{proof}
By the proof of Proposition \ref{danmig} one can get the following result which
will be the key point to get
\eqref{momentumpreciseV}.
\begin{prop}\label{thmcorg}
Let $s\geq 0$ be an integer and $V$ a real valued potential that satisfies
$\partial_x^{j} V\in L^\infty(\R)$ and $\lim_{x\rightarrow \infty} 
\partial_x^{j} V=0$ for $j=0,\cdots ,s$. Assume moreover
\eqref{reedsimon} and
for every $g\in H^s(\R)$ let $g_\pm\in L^2(\R)$ be given by \eqref{reedsimon}.
Then necessarily $g_\pm \in H^s(\R)$ and moreover
\begin{equation}\label{thmcrogg}
\int_{\R} \big|\partial_x^s g_\pm\big|^2 dx= \|(\sqrt{-\partial_x^2 - V})^sg\|_{L^2(\R)}^2.
\end{equation}
\end{prop}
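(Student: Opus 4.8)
The plan is to reduce the identity \eqref{thmcrogg} to the vanishing, as $t\to\pm\infty$, of the lower-order terms produced when one expands the perturbed homogeneous norm. First, by Proposition~\ref{upgrades} the scattering states $g_\pm$ furnished by \eqref{reedsimon} actually lie in $H^s(\R)$, and the convergence $u(t):=e^{it(\partial_x^2+V)}g\to v(t):=e^{it\partial_x^2}g_\pm$ holds in $H^s(\R)$ as $t\to\pm\infty$; in particular $\sup_t\|u(t)\|_{H^s(\R)}<\infty$ (this also follows from the conservation $\|u(t)\|_{H^s_V}=\|g\|_{H^s_V}$ together with \eqref{equivalence}). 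Since $e^{it(\partial_x^2+V)}$ commutes with $(\sqrt{-\partial_x^2-V})^s$ and is an $L^2$ isometry, the quantity
\begin{equation*}
\big((-\partial_x^2-V)^s u(t),u(t)\big)_{L^2(\R)}=\|(\sqrt{-\partial_x^2-V})^s g\|_{L^2(\R)}^2
\end{equation*}
is independent of $t$. On the other hand, expanding exactly as in \eqref{equivint},
\begin{equation*}
\big((-\partial_x^2-V)^s u(t),u(t)\big)_{L^2(\R)}=\|\partial_x^s u(t)\|_{L^2(\R)}^2+\sum_{\substack{j,l\in\{0,\dots,s\}\\ j+l<2s}}\int_{\R}W_{j,l}\,\partial_x^j u(t)\,\partial_x^l\bar u(t)\,dx,
\end{equation*}
where each $W_{j,l}$ is a linear combination of products of derivatives of $V$ of order at most $s$. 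Thus it suffices to prove that the top-order term converges to $\|\partial_x^s g_\pm\|_{L^2}^2$ and that the remaining sum converges to $0$.

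For the top-order term I would use that $\|\partial_x^s(u(t)-v(t))\|_{L^2}\to0$, while $\|\partial_x^s u(t)\|_{L^2}$ and $\|\partial_x^s v(t)\|_{L^2}=\|\partial_x^s g_\pm\|_{L^2}$ (the latter by unitarity of $e^{it\partial_x^2}$ on $L^2$ and its commutation with $\partial_x^s$) stay bounded; hence $\big|\|\partial_x^s u(t)\|_{L^2}^2-\|\partial_x^s g_\pm\|_{L^2}^2\big|\to0$ via the elementary inequality $|a^2-b^2|\le|a-b|(a+b)$.

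The heart of the matter is showing that each error integral $\int_\R W_{j,l}\,\partial_x^j u(t)\,\partial_x^l\bar u(t)\,dx$ tends to $0$. Because $V$ and its derivatives up to order $s$ vanish at spatial infinity, so do the products $W_{j,l}$; hence $\delta(R):=\sup_{|x|>R}|W_{j,l}(x)|\to0$ as $R\to\infty$. Splitting the integral at $|x|=R$, the outer part is bounded by $\delta(R)\,\sup_t\|u(t)\|_{H^s(\R)}^2$, which is small uniformly in $t$ for $R$ large. For the inner part I claim the local bound $\|\partial_x^m u(t)\|_{L^2(|x|\le R)}\to0$ as $t\to\pm\infty$, for every $m\le s$ and every fixed $R$: indeed $\partial_x^m v(t)=e^{it\partial_x^2}\partial_x^m g_\pm$ is a free evolution of the $L^2$ datum $\partial_x^m g_\pm$, so by \eqref{saymptoticbehavior} its $L^2$ mass on $\{|x|\le R\}$ equals, up to an $o(1)$ error, $\int_{|\xi|\le R/(2|t|)}|\widehat{\partial_x^m g_\pm}(\xi)|^2\,d\xi$, which tends to $0$ by absolute continuity of the integral; adding $\|\partial_x^m(u(t)-v(t))\|_{L^2}\to0$ yields the claim for $u(t)$. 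Consequently the inner part is $\le\|W_{j,l}\|_{L^\infty}\,\|\partial_x^j u(t)\|_{L^2(|x|\le R)}\,\|\partial_x^l u(t)\|_{L^2(|x|\le R)}\to0$ for fixed $R$.

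Combining the three steps, I would let $t\to\pm\infty$ in the displayed identity: the left-hand side is the constant $\|(\sqrt{-\partial_x^2-V})^s g\|_{L^2}^2$, the top-order term tends to $\|\partial_x^s g_\pm\|_{L^2}^2$, and the error sum tends to $0$, which is precisely \eqref{thmcrogg}. I expect the local decay estimate controlling the error terms to be the only genuine obstacle; everything else is soft. Two points to respect: the splitting must be carried out in the order ``first fix $R$ large to absorb the tail uniformly in $t$, then send $t\to\pm\infty$'', and the vanishing-at-infinity hypothesis on all derivatives of $V$ (not merely their boundedness, as in Proposition~\ref{danmig}) is exactly what drives the argument.
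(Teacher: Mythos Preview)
Your proposal is correct and follows essentially the same strategy as the paper: conserve the $\dot H^s_V$ norm along the perturbed flow, expand it via \eqref{equivint}, match the leading term to $\|\partial_x^s g_\pm\|_{L^2}^2$ through the $H^s$ scattering supplied by Proposition~\ref{upgrades}, and show the cross terms $\int W_{j,l}\,\partial_x^j u\,\partial_x^l\bar u$ vanish by splitting space into a region where the coefficients $W_{j,l}$ are small and a region where the solution decays locally in $L^2$ via \eqref{saymptoticbehavior}. The only cosmetic difference is the location of the cut: the paper splits at the moving scale $|x|=\delta t$ (small $\delta$ controls the inner mass, growing $\delta t$ forces $W_{j,l}\to0$ outside), whereas you split at a fixed radius $|x|=R$ (large $R$ makes $W_{j,l}$ small outside, dispersion on the fixed compact $\{|x|\le R\}$ kills the inside) --- both choices work and yours is arguably a shade more direct.
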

\begin{proof}
We claim that
\begin{equation}\label{claimsobV}
\|e^{it (\partial_x^2 + V)} g\|_{\dot H^s_V(\R)}^2=\|e^{it (\partial_x^2 + V)} g\|_{\dot H^s(\R)}^2+ o(1),
\quad { as }  \quad t\rightarrow \pm \infty.
\end{equation}
On the other hand by using Proposition \ref{upgrades}
we get 
\begin{equation}\label{asymptmer}\|e^{it (\partial_x^2 + V)} g - e^{it \partial_x^2} g_\pm\|_{\dot H^s(\R)}\overset{t\pm \infty}
\rightarrow 0\end{equation}
and we also have
\begin{equation}\label{isomV=0}
\|e^{it \partial_x^2} g_\pm\|_{\dot H^s(\R)}=\|g_\pm\|_{\dot H^s(\R)},
\quad \|g\|_{\dot H^s_V(\R)}=\|e^{it (\partial_x^2 + V)} g\|_{\dot H^s_V(\R)}
\end{equation}
by the fact that 
$e^{it (\partial_x^2 + V)}$ is an isometry on $\dot H^s_V(\R)$. It is now easy to conclude \eqref{thmcrogg}.
Next we prove \eqref{claimsobV}. In order to do that first notice that
\begin{equation}\label{impdeca}
\forall\, \epsilon>0 \quad \exists\, \delta>0 \hbox{ s.t. }
\limsup_{t\rightarrow \pm \infty}  \|\partial_x^j v(t)\|_{L^2(|x|<\delta t)}<\epsilon,
\quad \forall j=0,\dots,s
\end{equation}
where $v(t,x)=e^{it(\partial_x^2 + V)} g$. 
Indeed, by combining 
\eqref{reedsimons} with \eqref{saymptoticbehavior} and the change of variable formula, then we get
\begin{multline*}\int_{|x|<\delta t} |\partial_x^j v (t,x)|^2 dx=\int_{|x|<\delta t} |\partial_x^j u_\pm (t,x)|^2 dx
+o(1)\\=  \int_{|x|<\delta/2} 
|{\mathcal F} ({\partial_x^j g_\pm})|^2 dx +o(1), \hbox{ as } \quad t\rightarrow \pm \infty
\end{multline*}
where $u_\pm(t,x)=e^{it\partial_x^2} g_\pm$. We conclude since
we have 
$$
\lim_{\delta\rightarrow 0}
\int_{|x|<\delta/2} 
|{\mathcal F} ({\partial_x^j g_\pm})|^2 dx=0\,.
$$
Next, notice that by \eqref{equivint}, in order to conclude \eqref{claimsobV}, it is sufficient to show 
\begin{equation}\label{crossterm}\int_{\R} W_{j,l} \partial_x^j v(t,x) \partial_x^l \bar v(t,x) dx\overset{t \pm \infty} \rightarrow 0, \quad \forall j, l\in \{0,\dots, s\},\quad j+l<2s
\end{equation} 
where $\lim_{|x|\rightarrow \infty} W_{j,l}=0$, since $W_{j,l}$ is a linear combination of product of 
functions that decay by assumption.
 Notice that by the equivalence of Sobolev spaces (see Proposition
 \ref{danmig}) and by using 
 the fact that $e^{it(\partial_x^2 + V)}$  are isometries on $H^s_V(\R)$,
 we have
\begin{equation}\label{HsHsV}\sup_t \|v(t,\cdot)\|_{H^s(\R)}<\infty.\end{equation}
Moreover we have (if $\delta$ is given by \eqref{impdeca} for a fixed $\epsilon>0$) 
 \begin{multline*}|\int_{\R} W_{j,l} \partial_x^j v(t,x) \partial_x^l \bar v(t,x) dx|
 \\\leq \int_{|x|<\delta t} |W_{j,l} \partial_x^j v(t,x) \partial_x^l \bar v(t,x)| dx+\int_{|x|>\delta t} |W_{j,l} \partial_x^j v(t,x) \partial_x^l \bar v(t,x)| dx\\
\leq \|W_{j,l}\|_{L^\infty(\R)} 
\sqrt{(\int_{|x|<\delta t} |\partial_x^j v(t,x)|^2 dx)}\|v(t,x)\|_{H^s(\R)} \\+
\|W_{j,l}\|_{L^\infty(|x|>\delta t)} \|v(t,x)\|_{H^s(\R)}^2.\end{multline*}
We conclude \eqref{crossterm} by combining \eqref{HsHsV}, 
\eqref{impdeca} and $\lim_{|x|\rightarrow \infty} W_{j,l}=0$.  
 \end{proof}
\section{Proof of Theorems \ref{mainlin} and \ref{mainnonlin}}
The main of this section is to prove the following  upper bound
\begin{equation}\label{upperboundt2s}
\int_{\R} x^{2s}|u(t,x)|^2 dx\lesssim \langle t\rangle ^{2s}\,.
\end{equation}
under the assumptions of Theorems \ref{mainlin} and \ref{mainnonlin}.
We provide one proof that works in both linear and nonlinear case (namely $\lambda=0$ and $\lambda<0$) , except that in the case $\lambda< 0$ we get an extra term that we shall treat by using the assumption \eqref{decay} done along the statement
of Theorem \ref{mainnonlin}. However as we shall see the assumption \eqref{decay}
is not needed in the linear case $\lambda=0$.
\\

We shall need the following proposition.

\begin{prop}\label{gron}
Let $H(t)\in {\mathcal C}^0([0, \infty); \R)\cap L^1((0,\infty);\R)$, $\alpha_i>0$ and $\beta_i\in [0, 1)$ for $i=1,\dots,m$, $C>0$ be given.
Let $F(t)\in {\mathcal C}^1((0, \infty);\R)\cap {\mathcal C}^0
([0, \infty);\R)$ be any function such that:
\begin{enumerate}
\item $F(t)\geq 0, \quad \forall t>0$;
\item $F(0)\leq C$;
\item the following inequality holds 
\begin{equation}\label{gronnew}
|\frac d{dt}F(t)|\leq C \sum_{i=1}^m \langle t\rangle^{\alpha_i} (F(t))^{\beta_i}+H(t) F(t),
\quad \forall t>0.
\end{equation} 
\end{enumerate}
Then there exists $K=K(\alpha_i, \beta_i, H, C)>0$ such that
$$F(t)\leq \max_{i=1,\dots, m} (K+Km
\langle t \rangle^{\alpha_i+1})^\frac 1{1-\beta_i}, \quad \forall t>0.$$
\end{prop}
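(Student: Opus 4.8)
The plan is to reduce the coupled nonlinear differential inequality \eqref{gronnew} to a scalar comparison problem and then integrate. First I would dispose of the linear term $H(t)F(t)$ by an integrating-factor substitution. Setting $G(t)=e^{-\int_0^t H(\tau)\,d\tau}F(t)$, the hypothesis $H\in L^1((0,\infty))$ guarantees that the exponential factor and its inverse are bounded above and below by positive constants, say $e^{-\|H\|_{L^1}}\leq e^{-\int_0^t H}\leq 1$. A direct computation shows that $G$ satisfies a differential inequality of the same shape but without the linear term, at the cost of replacing $C$ by a larger constant depending on $\|H\|_{L^1}$; since $F$ and $G$ are comparable, any polynomial bound for $G$ transfers to $F$. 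Hence it suffices to treat the model inequality $|\tfrac{d}{dt}G|\leq C'\sum_i \langle t\rangle^{\alpha_i}(G)^{\beta_i}$ with $G(0)\leq C'$.

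Next I would handle the finite sum by splitting according to which term dominates, or more cleanly by a continuation/bootstrap argument. The idea is to guess the final bound $G(t)\leq \Phi(t):=\max_i (K+Km\langle t\rangle^{\alpha_i+1})^{1/(1-\beta_i)}$ and verify it is preserved. Concretely, I would argue that on any interval where $G(t)\leq \Phi(t)$ one has, for each $i$, $(G(t))^{\beta_i}\leq \Phi(t)^{\beta_i}$, so that each summand $\langle t\rangle^{\alpha_i}(G)^{\beta_i}$ is controlled by a function whose primitive grows no faster than $\langle t\rangle^{\alpha_i+1}\Phi(t)^{\beta_i}$. Integrating \eqref{gronnew} from $0$ to $t$ and using $G(0)\leq C'$ then yields
\[
G(t)\leq C' + C'\sum_{i=1}^m \int_0^t \langle\tau\rangle^{\alpha_i}(G(\tau))^{\beta_i}\,d\tau.
\]
The key algebraic point is that for a single power nonlinearity $(G)^{\beta}$ with $\beta\in[0,1)$ and weight $\langle t\rangle^{\alpha}$, the integral inequality $G\leq A + B\int_0^t\langle\tau\rangle^{\alpha}G^{\beta}\,d\tau$ forces $G(t)\lesssim (A^{1-\beta}+B\langle t\rangle^{\alpha+1})^{1/(1-\beta)}$; this is the standard Bihari/Gronwall estimate for sublinear nonlinearities, obtained by differentiating $G^{1-\beta}$ or by separation of variables. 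The exponent $1/(1-\beta_i)$ and the factor $\langle t\rangle^{\alpha_i+1}$ in the statement are exactly what this computation produces, and taking the maximum over $i$ absorbs the $m$-term sum.

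The main obstacle I anticipate is the coupling among the $m$ distinct nonlinear terms with different exponents $\beta_i$: one cannot simply apply a single Bihari inequality, because the right-hand side mixes several powers of $G$. The cleanest way around this is the bootstrap: define $\Phi(t)$ as above with $K$ to be chosen, let $I=\{t\geq 0: G(s)\leq \Phi(s)\ \forall s\in[0,t]\}$, and show that $I$ is nonempty (from $G(0)\leq C'\leq K$), closed (by continuity of $G$ and $\Phi$), and open relative to $[0,\infty)$ (by re-inserting $G\leq\Phi$ into the integral inequality and checking the resulting bound is strictly below $\Phi$ when $K$ is large, using that each term contributes $\langle t\rangle^{\alpha_i+1}\Phi^{\beta_i}\leq m^{-1}(\Phi^{1-\beta_i}-K)\langle t\rangle^{0}$-type savings). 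Choosing $K=K(\alpha_i,\beta_i,H,C)$ large enough to close the bootstrap for every $i$ simultaneously is where the constants must be tracked carefully, but no genuinely hard estimate is involved; it is bookkeeping of powers. This then gives $G(t)\leq\Phi(t)$ for all $t$, and undoing the integrating-factor substitution yields the claimed bound for $F$.
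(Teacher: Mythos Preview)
Your proposal is correct in outline, and the overall strategy is sound, though it differs from the paper's argument in one substantive respect.

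\textbf{Where you diverge from the paper.} To eliminate the linear term $H(t)F(t)$ you invoke the integrating factor $G=e^{-\int_0^t H}\,F$; since $H\in L^1$ this factor is bounded above and below by positive constants (your stated upper bound of $1$ should be $e^{\|H\|_{L^1}}$, as $H$ is not assumed nonnegative, but this is harmless). The paper does \emph{not} use an integrating factor. Instead it chooses finitely many points $t_0>t_1>\dots>t_k=0$ so that $\int_{t_{j+1}}^{t_j}H<\tfrac12$ on each subinterval, integrates \eqref{gronnew} piecewise, and absorbs the contribution of $HF$ into the left-hand side on each piece; this produces $\sup_{[t_{j+1},t_j]}F\leq 2F(t_{j+1})+2C\sum_i(\cdots)$, and the values $F(t_j)$ are then controlled inductively in terms of $F(0)$. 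Your integrating-factor reduction is cleaner and avoids this iteration; the paper's route is more elementary but requires extra bookkeeping at the partition points.

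\textbf{Where the two arguments coincide.} After the linear term is removed, both proofs reduce to the same algebraic core: from an inequality of the form $\tilde G(t)\leq C'+C'\sum_i R_i(t)\,\tilde G(t)^{\beta_i}$ (with $\tilde G=\sup_{[0,t]}G$ and $R_i(t)\sim\langle t\rangle^{\alpha_i+1}$) deduce $\tilde G(t)\leq\max_i(K+KmR_i(t))^{1/(1-\beta_i)}$. The paper establishes this by showing the function $y\mapsto y-C'-C'\sum_i R_i y^{\beta_i}$ has a unique zero and that the proposed bound lies beyond it. Your bootstrap on $G\leq\Phi$ is the same fact in different clothing: the verification that the bootstrap closes (your ``openness'' step) amounts precisely to checking $\Phi>C'+C''\sum_i\langle t\rangle^{\alpha_i+1}\Phi^{\beta_i}$ for $K$ large, which is the paper's computation. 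You flag this as bookkeeping, and that is accurate; the one place to be careful is that $\Phi$ is a maximum over $i$, so when you feed $G\leq\Phi$ back into the $i$-th summand you should use $\Phi(\tau)\leq\Phi(t)$ (monotonicity) rather than try to estimate $\Phi(\tau)^{\beta_i}$ index by index, and then exploit $\Phi(t)^{1-\beta_i}\geq K+Km\langle t\rangle^{\alpha_i+1}$ for each $i$ separately to close.
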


\begin{proof}
As a first step we show a general fact that we shall use in the sequel.
Let $G(t)\geq 0$ be a generic 
non negative function, then we have the following implication:
\begin{multline}\label{gronnewnewR}
\exists C'>0, R_i>0, \beta_i\in [0,1) \hbox{ for } i=1,\dots, m \quad \hbox{ s. t. }
\\
G(t)\leq C'+ C'\sum_{i=1}^m R_i (G(t))^{\beta_i},
\\\Longrightarrow G(t)\leq \max_{i=1,\dots, m}
(\max\{1,C'\}+\max \{1, C'\} m R_i)^\frac 1{1-\beta_i}.\end{multline}
Notice that it is not restrictive to assume $C'\geq 1$, in fact if it is not the case
then the assumption implies
$G(t)\leq 1+ \sum_{i=1}^m R_i (G(t))^{\beta_i}$ and the conclusion becomes
$G(t)\leq \max_{i=1,\dots, m}
(1+m R_i)^\frac 1{1-\beta_i}.$
We introduce the function
$$(0, \infty)\ni y\rightarrow H (y)=y- C'-C' \sum_{i=1}^m R_i y^{\beta_i}.$$
Notice that 
\begin{equation}\label{appen1}\frac {d^2}{d y^2} H(y)\geq 0, \quad H(0)\leq 0, \quad
\lim_{y\rightarrow \infty} H(y)=\infty
\end{equation}
then it is easy to show that 
\begin{equation}\label{appen2}\exists! y_0\in (0, \infty) \hbox{ s.t. } H(y_0)=0.
\end{equation}
Hence by \eqref{gronnewnewR} we get
$$G(t)\in \{y>0| H(y)\leq 0\}\equiv [0,y_0].$$
We claim that
$$H(z_0)>0, \hbox{ where } z_0=\max_{i=1,\dots, m}
\{(C'+C'mR_i)^\frac 1{1-\beta_i}\}$$
and hence 
$z_0>y_0$, which in turn implies
$G(t)\in (0,z_0)$ and we conclude \eqref{gronnewnewR}.
In order to show $H(z_0)>0$ it is sufficient to notice that if
we define $$z_{0,i}=(C'+C'm R_i)^\frac 1{1-\beta_i}$$ 
then (since we are assuming $C^{'}\geq 1$) by direct inspection
$$\frac {z_{0,i}}m - \frac{C'}m - C' R_i  z_{0,i}^{\beta_i}>0.$$
Moreover
the function
$y\rightarrow \frac {y}m - \frac{C'}m - C' R_i y^{\beta_i}$
is increasing in the region where it is positive (it follows from the fact that this function has similar properties to the ones satisfied by $H(y)$, see \eqref{appen1})
and it implies 
$$\frac {z_0}m - \frac{C'}m - C' R_i z_0^{\beta_i}>0, \hbox{ where } 
z_0=\max\{z_{0,i}, i=1,...,m\}.$$
Summarizing we get
\begin{equation*}H(z_0)= z_0- C' - C' \sum_{i=1}^m R_i z_0^{\beta_i}
=
\sum_{i=1}^m \big( \frac {z_0}m - \frac {C'}m - C' R_i  z_0^{\beta_i}
\big) 
>0.\end{equation*}
Next we come back to our original function $F(t)$, and we 
claim that the following inequality occurs:
\begin{equation}\label{gronnewnew}
\tilde F(t)\leq C''+ C''\sum_{i=1}^m \langle t \rangle^{\alpha_i+1} (\tilde F(t))^{\beta_i}\end{equation}
with a constant $C''=C''(\alpha_i, \beta_i, H, C)>1$, where
$\tilde F(t)=\sup_{s\in (0, t)} F(s)$.
Once this is established then 
by \eqref{gronnewnewR}, where we choose $G(t)=\tilde F(t)$,
we get $$\tilde F(t)\leq \max_{i=1,\dots, m}
(\max\{1,C''\}+\max\{1,C''\} m\langle t\rangle^{\alpha_i+1})^\frac 1{1-\beta_i}$$ and we conclude.
Next we show that \eqref{gronnew} implies \eqref{gronnewnew} and the proof will
be complete.
Since $H(t)\in L^1((0, \infty);\R)$ we can fix $t_0>1$ large enough in such a way that
$\int_{t_0}^\infty H(\tau) d\tau<\frac 12$, then by \eqref{gronnew} 
$$\sup_{s\in [t_0, t]}F(s)\leq F(t_0)+ C 
 \sum_{i=1}^m  (\sup_{s\in [t_0, t]}F(s))^{\beta_i} \int_{t_0}^t \langle \tau \rangle^{\alpha_i} 
 d\tau  + \frac 12 (  \sup_{s\in [t_0, t]}F(s))$$
and hence (since we have choosen $t_0>1$ and hence $\langle \tau \rangle\leq \sqrt 2 \tau$ for $\tau>t_0$)
\begin{equation}\label{tzero}\sup_{s\in [t_0, t]}F(s)\leq 2F(t_0) + 
2 C \sum_{i=1}^m \frac{(\sqrt 2)^{\alpha_i} t^{\alpha_i+1} }{\alpha_i+1}  (\sup_{s\in [t_0, t]}F(s))^{\beta_i}
, \quad \forall t>t_0.
\end{equation}
Notice that since $H\in L^1(0, \infty)$ we can choose, along with the point $t_0$ 
fixed above, other points such that $t_0>t_1>t_2\dots >t_k=0$ such that
\begin{equation}\label{pratfin}\int_{t_{j+1}}^{t_{j}} H(\tau) d\tau<\frac 12, \quad  
\int_{t_{j+1}}^{t_{j}} \langle \tau\rangle^{\alpha_i} d\tau< 1.\end{equation}
Arguing as along the proof of \eqref{tzero} and by using \eqref{pratfin} we get:
\begin{equation}\label{tjtj+1}\sup_{s\in [t_{j+1}, t]}F(s)\leq 2F(t_{j+1}) + 
2 C \sum_{i=1}^m (\sup_{s\in [t_{j+1}, t]}F(s))^{\beta_i}, \quad \forall t\in [t_{j+1}, t_j].
\end{equation}
It is now easy to deduce from \eqref{tzero} and \eqref{tjtj+1} that 
$$\sup_{s\in [0, t]} F(s)\leq 2 k \max_{j=0,\dots, k} F(t_j) +
2 C k \sum_{i=1}^m (1+\frac{(\sqrt 2)^{\alpha_i} t^{\alpha_i+1} }{\alpha_i+1})    (\sup_{s\in [0, t]} F(s))^{\beta_i},
\quad \forall t>0
$$
and from this we get \eqref{gronnewnew}, provided that we show
$\max_{j=0,\dots, k} F(t_j)< C^{'''}$ where $C^{'''}=C^{'''}(C, \alpha_i, \beta_i, H)>0$.
In order to do that we go back to \eqref{tjtj+1}, and notice that 
by using again \eqref{gronnewnewR}, where we choose $G(t)=G_j(t)=\sup_{s\in [t_{j+1}, t]} F(s)$,
we get
\begin{multline*}F(t_j)\leq G_j(t_j)\\\leq \max_{i=1,\dots, m}
\big (\max\{1, 2F(t_{j+1}), 2C\}+\max\{1, 2F(t_{j+1}), 2C\} m
\big )^\frac 1{1-\beta_i}\end{multline*}
and hence $F(t_j)$ can be controlled by $F(t_{j+1})$. If we iterate $k$ times this construction
we control $F(t_j)$ by a function of $F(t_k)=F(0)<C$.


\end{proof}

We shall also need the following proposition to deal with the nonlinear case $\lambda< 0$.

\begin{prop}
Assume $V\in L^\infty$ is real valued and $\partial_x^j V\in L^\infty$ for $j=1,\dots, s$. Let $u(t,x)$ be as in Theorem \ref{main}, then we have
\begin{equation}\label{Hs}\sup_{t}\|u(t,x)\|_{H^s({\R})}<\infty.\end{equation}
\end{prop}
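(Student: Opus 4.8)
The plan is to prove the uniform $H^s$ bound \eqref{Hs} for the nonlinear solution by induction on $s$, combined with an energy (Gronwall) argument for the top-order derivative. The base case is supplied for free: since the equation \eqref{NLS} is defocusing ($\lambda<0$) with a bounded potential, the conserved mass and energy give the uniform $H^1$ bound $\sup_t\|u(t)\|_{H^1(\R)}<\infty$ (this is the standard global well-posedness input already invoked in the introduction). So I would assume $\sup_t\|u(t)\|_{H^{s-1}(\R)}<\infty$ and upgrade it to order $s$.

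For the inductive step, I would differentiate the equation $s$ times and set up an energy estimate for $\partial_x^s u$. Concretely, I would test the equation $\partial_t(\partial_x^s u)=i(\partial_x^2+V)\partial_x^s u+i\lambda\,\partial_x^s(|u|^{2k}u)$ against $\partial_x^s u$ in $L^2$ and take the real part, so that the purely dispersive term $i\partial_x^{s+2}u$ drops out. This yields
\begin{equation*}
\frac{d}{dt}\|\partial_x^s u(t)\|_{L^2(\R)}^2
= 2\,\Re\!\big(i\,\partial_x^s(Vu)+i\lambda\,\partial_x^s(|u|^{2k}u),\,\partial_x^s u\big)_{L^2(\R)}.
\end{equation*}
By the Leibniz rule, the top-order potential contribution $i\,V\,\partial_x^s u$ pairs with $\partial_x^s u$ to give a purely imaginary quantity whose real part vanishes (here I use $V$ real valued), while every remaining term involves at most $s-1$ derivatives falling on $u$ and is multiplied by a bounded function $\partial_x^j V$, $j=1,\dots,s$. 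The nonlinear term is handled the same way: the term where all $s$ derivatives hit a single factor $u$ contributes, after integration by parts, a controllable commutator, and the remaining terms distribute derivatives so that no factor carries more than $s-1$ derivatives.

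The key technical point is that every term on the right is estimated by $\|u\|_{L^\infty(\R)}$ (or a power of it) times lower-order Sobolev norms times $\|\partial_x^s u\|_{L^2(\R)}$, using the Gagliardo--Nirenberg interpolation inequalities exactly as in the proof of Proposition~\ref{danmig} (see \eqref{KcK}). Invoking the inductive hypothesis $\sup_t\|u(t)\|_{H^{s-1}(\R)}<\infty$ and the uniform $L^\infty$ decay \eqref{decay}, which gives $\sup_t\|u(t)\|_{L^\infty(\R)}<\infty$, all the lower-order factors are bounded uniformly in $t$, so the estimate collapses to a differential inequality of the form
\begin{equation*}
\Big|\frac{d}{dt}\|\partial_x^s u(t)\|_{L^2(\R)}^2\Big|
\le C\,\|\partial_x^s u(t)\|_{L^2(\R)}^2+C,
\end{equation*}
which by Gronwall's lemma would only give exponential growth.

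The hard part is therefore to produce a \emph{uniform} in time bound rather than a growing one, and here \eqref{decay} does more than just bound $\|u\|_{L^\infty}$: the $L^1_t$ integrability of $\langle t\rangle^{-1/2}$ raised to a power $\geq 2$ (coming from the $2k$ extra factors in the nonlinearity, $k\ge2$) makes the nonlinear forcing time-integrable, so the coefficient multiplying $\|\partial_x^s u\|_{L^2}^2$ is an $L^1((0,\infty))$ function $H(t)$. This is precisely the structure of Proposition~\ref{gron}: the differential inequality \eqref{gronnew} holds with an $L^1$ coefficient $H(t)$ in front of $F(t)=\|\partial_x^s u\|_{L^2}^2$ and lower-order polynomial-in-$t$ terms of the subcritical type $\langle t\rangle^{\alpha_i}F^{\beta_i}$ with $\beta_i<1$. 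I expect the main obstacle to be the careful bookkeeping needed to verify that, after distributing derivatives via Leibniz and integrating by parts, every term indeed fits either the $H(t)F(t)$ slot (with $H$ genuinely $L^1$, which forces using the full strength of the decay rate $t^{-1/2}$ and the assumption $k\ge2$) or the subcritical $\langle t\rangle^{\alpha_i}F^{\beta_i}$ slot; once this is done, Proposition~\ref{gron} delivers at worst polynomial growth, and a closer accounting in fact yields the uniform bound \eqref{Hs}.
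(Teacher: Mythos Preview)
Your proposal has a genuine gap in the treatment of the potential term. When you differentiate the equation $s$ times and run the energy estimate for $F(t)=\|\partial_x^s u(t)\|_{L^2}^2$, the commutator terms $\partial_x^j V\cdot\partial_x^{s-j}u$ (for $j=1,\dots,s$) pair with $\partial_x^s u$ to produce a contribution of size
\[
C\,\|\partial_x^{s-j}u\|_{L^2}\,\|\partial_x^s u\|_{L^2}\ \lesssim\ C\sqrt{F(t)}
\]
(using the inductive hypothesis, or $CF(t)^{\beta}$ with $\beta<1$ using interpolation). The constant $C$ here is time-independent: the functions $\partial_x^j V$ are only assumed bounded, not decaying in time. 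Feeding this into Proposition~\ref{gron} with $\alpha_i=0$ and $\beta_i<1$ gives $F(t)\lesssim\langle t\rangle^{(\alpha_i+1)/(1-\beta_i)}$, i.e.\ genuine polynomial growth, and no ``closer accounting'' of the same inequality can improve this to a uniform bound. Proposition~\ref{gron} is designed to produce the polynomial moment bounds \eqref{induct1}, not the uniform $H^s$ bound.

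The paper avoids this obstruction by absorbing the potential into the linear propagator rather than treating it as a forcing term. Using the Duhamel formula based on $e^{it(\partial_x^2+V)}$ and the equivalence of norms from Proposition~\ref{danmig} (so that $e^{it(\partial_x^2+V)}$ is uniformly bounded on $H^s(\R)$), only the nonlinear term survives:
\[
\|u(t)\|_{H^s}\ \lesssim\ \|u(T)\|_{H^s}+\int_T^\infty \|u|u|^{2k}\|_{H^s}\,d\tau\ \lesssim\ \|u(T)\|_{H^s}+\sup_{t>T}\|u(t)\|_{H^s}\int_T^\infty\langle\tau\rangle^{-k}\,d\tau,
\]
and since $k\ge 2$ the last integral can be made $<1$ by choosing $T$ large, so the supremum is absorbed into the left side. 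Equivalently, in your framework the correct quantity to track is $\|u\|_{H^s_V}^2$ rather than $\|\partial_x^s u\|_{L^2}^2$: then the linear part contributes nothing and the differential inequality is purely $|F'|\le H(t)F$ with $H\in L^1$, which does give a uniform bound.
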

\begin{proof} We estimate the $\sup$ for $t\in (0, \infty)$, the estimate for $t\in (-\infty, 0)$ is similar.
From now on we shall use without any further comments the equivalence of norms \eqref{equivalence}.
By the integral formulation of \eqref{NLS} we get  for every $T\geq 0$:
\begin{multline*}
\|u(t,x)\|_{H^s({\R})}\lesssim \|u(T,x)\|_{H^s({\R})}+\|\int_T^t  e^{i(t-\tau)(\partial_x^2 + V)} (u|u|^{2k}) d\tau\|_{H^s({\R})}
\\
\lesssim \|u(T,x)\|_{H^s({\R})} + \int_T^\infty \|u|u|^{2k}\|_{H^s({\R})} d\tau
\\
\lesssim \|u(T,x)\|_{H^s({\R})}+ \sup_{t>T} \|u(t,x)\|_{H^s({\R})} 
\int_T^\infty \frac 1{\langle \tau \rangle^{k}} d\tau
\end{multline*}
where we have used \eqref{decay} at the last step.
Therefore one can conclude the following estimate $\sup_{t>T}\|u(t,x)\|_{H^s({\R})}<\infty$ provided that we choose $T>0$ large enough.
The proof of \eqref{Hs} is now straightforward.

\end{proof}

\noindent {\em Proof of \eqref{upperboundt2s}.} Next we shall prove
the following generalization of  \eqref{upperboundt2s}:
\begin{equation}\label{induct1}
\int_{\R} x^{2s-2h}|\partial_x^h u(t,x)|^2 dx\lesssim 
\langle t\rangle ^{2s-2h}, \quad \forall h=0,\dots , s.
\end{equation}
We shall work first by induction on $s$ and then by backward induction on $h$.
Hence we assume that \eqref{induct1} has been established
for all $\bar s<s$ and for all $h=0,\dots, \bar s$. Notice that this fact implies
\begin{equation}\label{induct1simple}
\int_{\R} x^{2s-2k}|\partial_x^j u(t,x)|^2 dx\lesssim 
\langle t\rangle ^{2s-2k}, \quad \forall 0\leq j<k\leq s.
\end{equation}
Based on \eqref{induct1simple} we shall establish \eqref{induct1}.

For $h=s$ we have that \eqref{induct1} follows by $\sup_t \int |\partial_x^s u(t,x)|^2 dx<\infty$.
This fact follows in the linear case ($\lambda=0$) from the conservation of $\|e^{it(\partial_x^2 + V)} f\|_{H^s_V(\R)}$ in conjunction with 
Proposition \ref{danmig}.
In the nonlinear case ($\lambda<0$) it follows by \eqref{Hs}.\\

{\bf Next we establish \eqref{induct1} for 
$h=1, \cdots, s-1$.}
\\

We use backward induction on $h$. Assume that \eqref{induct1} has been proved for 
$h=\bar h,\cdots, s$ with $\bar h\geq 2$, then we prove that it is true for $h=\bar h-1$.
We point out that in principle it is unclear that the quantity
$\int_{\R} x^{2s-2\bar h +2}|\partial_x^{\bar h-1} u(t,x)|^2
dx$ is finite. For this reason we introduce a family of cut-off functions and at the end we pass to the limit as $\epsilon\rightarrow 0$ by getting uniform bounds. We define 
\begin{multline*}\varphi_\epsilon(x)= \epsilon^{-s+\bar h -1}\psi(\epsilon x)
\hbox{ where }\psi(x)=\eta(x) x^{s-\bar h +1} \hbox{ and }\\
\eta\in C^\infty_0(\R), \quad \eta(x)=1 \quad \forall x\in (-1, 1),
\quad \eta(x)=0 \quad \forall |x|>2.\end{multline*}
Next we compute
\begin{equation*}\frac{d}{dt} \int_{\R} \varphi_\epsilon^2|\partial_x^{\bar h-1} u(t,x)|^2
dx=2 \Re \int_{\R} \varphi_\epsilon^2\partial_x^{\bar h-1} \partial_t u(t,x) \partial_x^{\bar h-1} \bar u(t,x)
dx 
\end{equation*}
and by using the equation solved by $u(t,x)$ we have
\begin{multline*}
\dots=2 \Re \int_{\R}  \varphi_\epsilon^2 \partial_x^{\bar h-1} i \partial_{x}^2 u(t,x) \partial_x^{\bar h-1} \bar u(t,x)
dx \\+ 2\lambda  \Re \int_{\R}  \varphi_\epsilon^2 \partial_x^{\bar h-1} i (u(t,x)
|u(t,x)|^{2k}) \partial_x^{\bar h-1} \bar u(t,x)
dx\\+2 \Re \int_{\R}  \varphi_\epsilon^2 \partial_x^{\bar h-1} i (Vu(t,x)) \partial_x^{\bar h-1} \bar u(t,x)
dx\\
=-2 \Im \int_{\R}  \varphi_\epsilon^2 \partial_x^{\bar h+1} u(t,x) \partial_x^{\bar h-1} \bar u(t,x)
dx \\- 2 \lambda \Im \int_{\R}  \varphi_\epsilon^2 \partial_x^{\bar h-1}  (u(t,x)|u(t,x)|^{2k}) \partial_x^{\bar h-1} \bar u(t,x) dx\\-2 \Im \int_{\R}  \varphi_\epsilon^2\partial_x^{\bar h-1} (Vu(t,x)) \partial_x^{\bar h-1} \bar u(t,x)
dx
=I_\epsilon+II_\epsilon+III_\epsilon.\end{multline*}
By integration by parts we get
\begin{multline}\label{gronc}|I_\epsilon|=4 |\Im \int_{\R}   \varphi_\epsilon \partial_x \varphi_\epsilon \partial_x^{\bar h} u(t,x) \partial_x^{\bar h-1} \bar u(t,x)
dx| \\\lesssim \sqrt{\int_{\R} \varphi_\epsilon^2 |\partial_x^{\bar h-1} u(t,x)|^2
dx } \sqrt{\int_{\R} (\partial_x \varphi_\epsilon)^2 |\partial_x^{\bar h} u(t,x)|^2
dx }.
\end{multline}
Notice that
$$\partial_x \varphi_\epsilon(x)=\epsilon^{-s+\bar h}
[\partial_x \eta(\epsilon x) (\epsilon x)^{s-\bar h +1} 
+ (s-\bar h +1) (\epsilon x)^{s-\bar h} \eta(\epsilon x) ]$$
and hence, due to the support of $\eta, \partial_x \eta $ we get
$$|\partial_x \varphi_\epsilon|(x)\lesssim |x|^{s-\bar h}.$$
Then from \eqref{gronc} we get
\begin{multline}\label{Iepsilon}
|I_\epsilon| \lesssim 
\sqrt{\int_{\R} \varphi_\epsilon^2 |\partial_x^{\bar h-1} u(t,x)|^2
dx } \sqrt{\int_{\R} x^{2s-2\bar h} |\partial_x^{\bar h} u(t,x)|^2
dx }
\\\lesssim \langle t\rangle^{s-\bar h}
 \sqrt{\int_{\R} \varphi_\epsilon^2|\partial_x^{\bar h-1} u(t,x)|^2
dx }\,,
\end{multline} 
where we have used the inductive assumption that \eqref{induct1} is true for $h=\bar h$.

Concerning the estimate of $II_\epsilon$ notice that
\begin{multline}\label{gobak} |II_\epsilon|=2\lambda |\Im \int_{\R}  \varphi_\epsilon^2 \partial_x^{\bar h-1}  (u(t)|u(t)|^{2k}) \partial_x^{\bar h-1} \bar u(t)
dx|
\\\lesssim \|u(t)\|_{L^\infty({\R})}^{2k}
\int_{\R}  \varphi_\epsilon^2(x) |\partial_x^{\bar h-1} u(t)|^2 dx
\\+ 
\sum_{\substack{j_1,\dots, j_{2k+1}\in \N\\\sum_{l=1}^{2k+1} j_l=\bar h-1\\
j_1\leq \bar h -2
}}\sqrt{\int_{\R}  \varphi_\epsilon^2 |\partial_x^{\bar h-1} u(t,x)|^2 dx}
\sqrt{\int_{\R}  \varphi_\epsilon^2 |\partial_x^{j_1} u(t,x)|^2 dx} 
\prod_{i=2}^{2k+1} \|\partial_x^{j_i} u(t,x)\|_{L^\infty({\R})}.\end{multline}
Moreover by Sobolev embedding $H^1({\R})\subset L^\infty
({\R})$ and interpolation we get:
\begin{multline*}
\|\partial_x^{j_i} u(t,x)\|_{L^\infty({\R})}
\lesssim \|u(t,x)\|_{L^\infty(\R)}^{1-\frac{j_i}{s-1}}
\|\partial_x^{s-1}u(t,x)\|_{L^\infty({\R})}^{\frac{j_i}{s-1}}
\\\lesssim 
\|u(t,x)\|_{L^\infty({\R})}^{1-\frac{j_i}{s-1}}
\|u(t,x)\|_{H^s({\R})}^{\frac{j_i}{s-1}}, \quad \forall i=2,\dots, 2k+1.
\end{multline*}
Hence we get 
$$\prod_{i=2}^{2k+1} \|\partial^{j_i}_x u(t,x)\|_{L^\infty({\R})}\lesssim 
\|u(t,x)\|_{L^\infty({\R})}^{2k-\frac{\sum_{i=2}^{2k+1} j_i}{s-1}}
\lesssim \frac 1{\langle t\rangle^{1+\gamma}}, \quad \hbox { for some } \gamma>0$$
where we used (due to the contratint imposed on $j_l$) $\sum_{i=2}^{2k+1} j_i< \bar h-1\leq s-1$, \eqref{decay}
and the fact that $\sup_t \|u(t,x)\|_{H^s(\R)}<\infty$. On the other hand
we have  by the induction assumption \eqref{induct1simple}
$$\int_{\R}  \varphi_\epsilon^{2} |\partial_x^{j_1} u(t,x)|^2 dx
\lesssim \int_{\R}  |x|^{2s-2\bar h +2} |\partial_x^{j_1} u(t,x)|^2 dx \lesssim 
\langle t \rangle^{2s-2\bar h +2}.$$
Summarizing, going back to \eqref{gobak} and by recalling again \eqref{decay}, we get
\begin{equation}\label{IIepsilon}
|II_{\epsilon}|\lesssim \langle t \rangle^{-k} \int_{\R}  \varphi_\epsilon^2 |\partial_x^{\bar h-1} u(t,x)|^2
dx+\langle t \rangle^{s-\bar h-\gamma}
 \sqrt{\int_{\R}  \varphi_\epsilon^2 |\partial_x^{\bar h-1} u(t,x)|^2 dx}\,.
 \end{equation}

Next we estimate $III_\epsilon$, and we notice that by using the Leibniz we get:
\begin{multline}\label{IIIepsilon}
|III_\epsilon|= 2|\Im \int_{\R}  \varphi_\epsilon^{2} \partial_x^{\bar h-1} (Vu(t,x)) \partial_x^{\bar h-1} \bar u(t,x)
dx|\\
\lesssim \sum_{0\leq j\leq \bar h -2}  \int_{\R}  \varphi_\epsilon^{2}
|\partial_x^{\bar h-1-j} V| |\partial_x^j u(t,x)| |\partial_x^{\bar h-1} \bar u(t,x)|
dx
\\\lesssim  \sum_{0\leq j\leq \bar h -2} \sqrt{\int_{\R} |\partial_x^{\bar h-1-j} V|^2 
\varphi_\epsilon^{2}|\partial_x^{j} u(t,x)|^2
dx}
 \sqrt{\int_{\R} \varphi_\epsilon^{2}
|\partial_x^{\bar h-1} u(t,x)|^2
dx} \\\lesssim  
\langle t\rangle^{s-\bar h}\sqrt{\int_{\R} \varphi_\epsilon^{2}|\partial_x^{\bar h-1} u(t,x)|^2
dx}
\end{multline} where at the last step we used the inductive assumption \eqref{induct1simple}
in conjunction with the fact that by \eqref{boundedreivatives} we have the bound
$$|\partial_x^{\bar h-1-j} V|^2 \varphi_\epsilon^{2}\lesssim |x|^{2s-2\bar h}.$$ 
Summarizing we get
\begin{equation}\label{sh1}|\frac d{dt} g_\epsilon(t)|\lesssim \langle t\rangle ^{s-\bar h} \sqrt{g_\epsilon(t)} +
\langle t \rangle^{-k} g_\epsilon(t)\end{equation}
where $g_\epsilon (t)=\int_{\R} \varphi_\epsilon^2|\partial_x^{\bar h-1} u(t,x)|^2
dx$
which in turn by Proposition \ref{gron}, where we choose $F(t)=g_\epsilon(t)$, 
implies
$$g_\epsilon(t)\lesssim \langle t \rangle^{2s-2\bar h +2}$$
with a constant uniform w.r.t. to $\epsilon>0$ and $t$. 
Then we conclude
the desired estimate
$$\int_{\R} x^{2s-2\bar h +2}|\partial_x^{\bar h-1} u(t,x)|^2
dx\lesssim \langle t \rangle^{2s-2\bar h +2}$$
by passing to the limit as $\epsilon\rightarrow 0$.
\\
\\
{\bf Finally we establish \eqref{induct1} for $h=0$.} 
\\
\\
Since a priori it is unclear that $u(t,x)\in \Sigma_s(\R)$, 
we introduce
\begin{multline*}\varphi_\epsilon(x)= \epsilon^{-s}\psi(\epsilon x)
\hbox{ where }\psi(x)=\eta(x) x^{s} \hbox{ and }\\
\eta\in C^\infty_0(\R), \quad \eta(x)=1 \quad \forall x\in (-1, 1),
\quad \eta(x)=0 \quad \forall |x|>2\end{multline*}
and we compute 

\begin{multline*}\big |\frac d{dt} \int_{\R} \varphi_\epsilon^2 |u(t,x)|^2 dx\big |= 2|\Re \int 
\varphi_\epsilon^2 \partial_t u(t,x) \bar u(t,x)|
=2 |\Im \int_{\R} \varphi_\epsilon^2
\partial_{x}^2 u(t,x) \bar u(t,x) dx|
\\=4 |\Im \int_{\R}  \varphi_\epsilon \partial_x \varphi_\epsilon\partial_{x} u(t,x) 
\bar u(t,x) dx|\\\leq 4 \sqrt{\int_{\R} \varphi_\epsilon^2 |u(t,x)|^2 dx} \sqrt{\int_{\R} (\partial_x \varphi_\epsilon)^2 |\partial_x u(t,x)|^2 dx}. 
\end{multline*}
Next notice that
$$\partial_x \varphi_\epsilon(x)=\epsilon^{-s+1}\partial_x \eta(\epsilon x) (\epsilon x)^s+ s 
\epsilon^{-s+1} \eta(\epsilon x)
(\epsilon x)^{s-1} $$
and hence due to the properties of $\eta$ and $\partial_x \eta$ we get
$$|\partial_x \varphi_\epsilon(x)| \lesssim |x|^{s-1}$$
By combining this estimate with the fact that we have established in the previous steps 
\eqref{induct1} for $h=1$, we get
$$|\frac d{dt} \int_{\R} \varphi_\epsilon^2  |u(t,x)|^2 dx|\lesssim 
\langle t\rangle^{s-1} \sqrt{\int_{\R} 
\varphi_\epsilon^2  |u(t,x)|^2 dx}.
$$
By Proposition \ref{gron}
where we choose $F(t)=g_\epsilon(t)=\int_{\R} \varphi_\epsilon^2 |u(t,x)|^2 dx$,
we conclude 
$$g_\epsilon(t)\lesssim \langle t \rangle^{2s}$$ for every $\epsilon>0$ and $t$ 
with a constant uniform w.r.t. $\epsilon, t$.
Then by passing to the limit as $\epsilon\rightarrow 0$ we get the desired bound
$$\int_{\R} x^{2s}  |u(t,x)|^2 dx\lesssim \langle t\rangle ^{2s}.$$



\section{Proof of Theorem \ref{V=0}}\label{mainsection}

First notice that by Proposition \ref{upgrades} there exist
$f_\pm\in H^s(\R)$ such that 
\begin{equation}\label{partdeb}\|e^{it(\partial_x^2 + V)} f- 
e^{it\partial_x^2} f_\pm\|_{H^s(\R)}
\overset{t\rightarrow \pm \infty} \longrightarrow 0.\end{equation}
The same argument to get \eqref{momentumprecise} in the nonlinear case $\lambda<0$ (see section \ref{dana} below),
can be adapted to prove in the linear setting
\begin{equation}\label{riodanfe}
\lim_{t\rightarrow \pm \infty }
\int_{\R} \big(\frac xt\big)^{2s}|e^{it(\partial_x^2 + V)} f|^2 dx= 2^{2s} \int_{\R} \big|\partial_x^s f_\pm\big|^2 dx  \,.
\end{equation}

Then we conclude
\eqref{momentumpreciseV} simply by recalling
\eqref{thmcrogg}.

\section{Proof of Theorem \ref{main}}\label{dana}


First we prove \eqref{alglin}. Recall that we have already proved (see \eqref{Hs})
\begin{equation}\label{Hsprime}\sup_{t}\|u(t,x)\|_{H^s({\R})}<\infty.\end{equation}
Next notice that by the integral formulation of \eqref{NLS} and
by \eqref{equivalence} we get
\begin{multline*}\|e^{-it_2(\partial_x^2 + V)} u(t_2,x)-e^{-it_1(\partial_x^2 + V)} u(t_1,x)\|_{H^s({\R})}
\\=\|\int_{t_1}^{t_2} e^{-i\tau(\partial_x^2 + V)}
(u|u|^{2k}) d\tau\|_{H^s({\R})}\lesssim \int_{t_1}^{t_2}
\| u|u|^{2k}\|_{H^s(\R)} d\tau\\\lesssim \int_{t_1}^{t_2} \|u(\tau,x)\|_{H^s(\R)} \|u
(\tau,x)\|_{L^\infty(\R)}^{2k}d\tau
\lesssim \int_{t_1}^{t_2} \frac 1{\tau^k} d\tau\overset {t_1, t_2\rightarrow \infty }\longrightarrow 0.\end{multline*}
Hence we deduce by completness that $\exists \tilde f_\pm\in H^s(\R)$ such that
$$\|u(t,x)-  e^{it(\partial_x^2 + V)} \tilde f_\pm\|_{H^s(\R)}\overset{t\rightarrow \pm \infty}\longrightarrow 0.$$
We conclude since by Proposition \ref{upgrades} there exist
$f_\pm\in H^s(\R)$ such that
$$\|e^{it(\partial_x^2 + V)} \tilde f_\pm - e^{it\partial_x^2 } f_\pm\|_{H^s(\R)}
\overset{t\rightarrow \pm \infty}\longrightarrow 0.$$

Next we focus on the proof of \eqref{momentumprecise}.
The following proposition will be crucial for our purpose.
\begin{prop}\label{part}
Let $u(t,x)$ be the unique global solution to \eqref{NLS}
with $f\in \Sigma_s({\R})$, then
\begin{equation}\label{equivfac}\lim_{R\rightarrow \infty}
(\limsup_{t\rightarrow \infty} \int_{|x|>Rt}  \big( \frac{x}{t}\big)^{2s} |u(t,x)|^2   dx)=0.\end{equation}
\end{prop}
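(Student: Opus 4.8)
The plan is to upgrade the global bound \eqref{upperboundt2s} --- which only gives $\int_{|x|>Rt}(x/t)^{2s}|u|^2\,dx\lesssim 1$ uniformly in $t$ and $R$, hence mere boundedness --- to genuine smallness of the tail as $R\to\infty$. The naive route of inserting the scattering decomposition $u=e^{it\partial_x^2}f_\pm+r$ with $\|r(t)\|_{H^s}\to 0$ fails: on the region $|x|>Rt$ the weight $(x/t)^{2s}\geq R^{2s}$ is large, so the $H^s$-small remainder $r$ gets amplified, and moreover we have no weighted control on the free wave $e^{it\partial_x^2}f_\pm$, since it is not known that $f_\pm\in\Sigma_s(\R)$. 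A genuinely localized argument is therefore required.

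First I would establish the base tightness estimate for the top-order derivative,
\[
\lim_{R\to\infty}\ \limsup_{t\to\infty}\int_{|x|>Rt}|\partial_x^s u(t,x)|^2\,dx=0.
\]
This is legitimate precisely because the weight here is $1$: applying \eqref{alglin} one gets $\|\partial_x^s u(t)-\partial_x^s e^{it\partial_x^2}f_\pm\|_{L^2}\to 0$, and then \eqref{saymptoticbehavior} applied to $h=\partial_x^s f_\pm$, together with the change of variables $\xi=x/(2t)$, identifies $\limsup_{t\to\infty}\int_{|x|>Rt}|\partial_x^s u|^2\,dx$ with $\int_{|\xi|>R/2}\xi^{2s}|\hat f_\pm(\xi)|^2\,d\xi$, which tends to $0$ as $R\to\infty$ since $f_\pm\in H^s(\R)$. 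The $H^s$-smallness of the remainder is not amplified in this step, in sharp contrast with the weighted quantity we ultimately want to bound.

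Next I would transfer this tightness down to the unweighted mass by a moving-cutoff energy estimate mirroring the proof of \eqref{upperboundt2s}. Introducing a smooth cutoff $\chi_R$ localizing to $|x|>Rt$, I set $g_{R,h}(t)=\int_\R \chi_R(x/t)^2\,x^{2(s-h)}|\partial_x^h u(t,x)|^2\,dx$ and run a backward induction on $h$ from $h=s$ (the base case above) down to $h=0$, which is exactly \eqref{equivfac}. Differentiating $g_{R,h-1}$ in time and invoking the equation produces, exactly as in \eqref{Iepsilon}, \eqref{IIepsilon}, \eqref{IIIepsilon}, a differential inequality of the type treated in Proposition \ref{gron}: the Laplacian flux terms are estimated by $\sqrt{g_{R,h-1}(t)}\,\sqrt{g_{R,h}(t)}$, the potential terms via \eqref{boundedreivatives}, and the nonlinear terms via \eqref{decay} and \eqref{Hs}. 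The decay in $R$ is supplied by two sources of localization: the already-established smallness of the higher-order quantity $g_{R,h}$, and the fact that every derivative falling on $\chi_R(x/t)$, in time or in space, is supported in the annulus $|x|\sim Rt$ and carries a factor of order $(Rt)^{-1}$.

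The main obstacle will be the bookkeeping in this last step: one must apply Proposition \ref{gron} while keeping the constants uniform in both $t$ and $R$, so that the resulting bound on $g_{R,h-1}(t)/\langle t\rangle^{2(s-h+1)}$ has $\limsup_{t\to\infty}$ tending to $0$ as $R\to\infty$, rather than merely staying bounded. The delicate points are the commutator and flux terms generated by the moving cutoff $\chi_R(x/t)$ against the Schr\"odinger flow, and the contribution of the potential, for which only the derivatives of $V$ decay --- by \eqref{boundedreivatives} --- while $V$ itself is merely bounded; these top-order potential terms must be absorbed using the annular support of the cutoff together with \eqref{decay}.
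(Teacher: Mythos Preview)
Your overall strategy --- base case at $h=s$ via scattering plus \eqref{saymptoticbehavior}, then backward induction on $h$ --- is exactly what the paper does, and your base case is identical to the paper's \eqref{largeNLS}. The paper, however, executes the inductive step differently. For $h=\bar h-1$ with $\bar h\geq 2$ it does \emph{not} differentiate in time or use the equation at all; instead it uses the purely spatial identity
\[
\int x^{2m}|\partial_x^{h-1}v|^2\,dx=-\int x^{2m}\partial_x^{h}v\,\partial_x^{h-2}\bar v\,dx-2m\int x^{2m-1}\partial_x^{h-1}v\,\partial_x^{h-2}\bar v\,dx
\]
applied to $v=\varphi_R(t,\cdot)u(t,\cdot)$, which expresses the unknown weighted norm as a product of the already-small quantity at level $h$ and a merely bounded quantity from \eqref{induct1}. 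Only the final step $h=1\to h=0$ is done by time-differentiation. This buys a lot: the potential and the nonlinearity never appear in the intermediate steps, so the worries you list in your last paragraph simply evaporate. Your time-differentiation route should also work, but it forces you to re-handle $V$ and the nonlinearity at every level.

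There is one genuine correction. Proposition~\ref{gron} is the wrong tool for the integration: it produces an upper bound $F(t)\lesssim K\langle t\rangle^{2(s-h+1)}$ with $K$ depending on the constants in the inequality and on $F(0)$, and nothing in its statement makes $K$ small as $R\to\infty$. What you actually need --- and what the paper does in its $h=0$ step --- is to integrate the differential inequality directly from a large initial time $t_\epsilon$, insert the known upper bound \eqref{induct1} on the ``bounded'' factor and the inductive smallness on the other, and then divide by $t^{2(s-h+1)}$ and let $t\to\infty$ so that the contribution of $g_{R,h-1}(t_\epsilon)$ disappears. Two small remarks on your last paragraph: the time derivative of $\chi_R(x/t)$ is of order $t^{-1}$, not $(Rt)^{-1}$, but it has a sign and can be dropped; and the top-order potential term $V|\partial_x^{h-1}u|^2$ is real, so it vanishes under $\Im$ and is not an issue.
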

\begin{proof}
As a preliminary fact we notice that \begin{equation}\label{largeNLS}
\big (\limsup_{t\rightarrow \infty} \int_{|x|>Rt} |\partial_x^j u(t,x)|^2 dx
\big )\overset {R\rightarrow \infty} \longrightarrow 0, \quad \forall j=0, \dots ,s.
\end{equation}
The proof of this fact follows from 
\begin{equation}\label{catben}\|\partial_x^j(u(t,x)) - \frac {e^{i\frac{x^2}{4t}}}{\sqrt {2i t}} 
{\mathcal F}(\partial_x^j f_+)(\frac {x}{2t})\|_{L^2({\R})}\overset{t\rightarrow \infty}\rightarrow 0
\end{equation}
(here $\mathcal F$ denotes the Fourier transform \eqref{fourier}) which in turn is a consequence of
$$\|u(t,x)- e^{it\partial_{x}^2} f_+\|_{H^s({\R})}\overset {t\rightarrow \infty} \longrightarrow 0$$
and the asymptotic description of free waves \eqref{saymptoticbehavior}. 
Then from \eqref{catben} we get 
\begin{equation*}\lim_{t\rightarrow \infty} 
\int_{|x|>Rt} |\partial_x^j u(t,x)|^2 dx= \int_{|x|>\frac R2} 
|{\mathcal F} (\partial_x^j f_+)|^2 dx
\end{equation*}
and hence \eqref{largeNLS} follows from
\begin{equation*}
\lim_{R\rightarrow \infty} \int_{|x|>\frac R2} 
|{\mathcal F} (\partial_x^j f_+)|^2 dx=0.
\end{equation*}
Next we work by induction on $s$ and we show the following fact 
(that in turn implies the conclusion of Proposition~\ref{part} if one takes $h=0$):
\begin{equation}\label{inductch}
\lim_{R\rightarrow \infty} (\limsup_{t\rightarrow \infty}
\int_{|x|>Rt} \big(\frac x t\big)^{2s-2 h} |\partial_x^{h} u(t,x))|^2 dx)=0,
\quad \forall h=0,\dots, s.
\end{equation}
Notice that for $h=s$ the estimate \eqref{inductch} follows by 
\eqref{largeNLS}.\\
\\
 
Next we prove \eqref{inductch}, by induction on $s$ and by
backward induction 
on $h$, for $h=1,\dots, s$. Namely we assume
\begin{equation}\label{ass1234}
\lim_{R\rightarrow \infty} (\limsup_{t\rightarrow \infty}
\int_{|x|>Rt} \big(\frac x t\big)^{2s-2k} |\partial_x^{j} u(t,x))|^2 dx)=0, \quad \forall
0\leq j< k\leq  s
\end{equation}
and
\begin{equation}\label{ass123}
\lim_{R\rightarrow \infty} (\limsup_{t\rightarrow \infty}
\int_{|x|>Rt} \big(\frac x t\big)^{2s-2\bar h} |\partial_x^{\bar h} u(t,x))|^2 dx)=0
\end{equation}
for some
$\bar h\in \{2,\dots, s\}$. Then we prove \eqref{inductch}
for $h=\bar h -1$.
Notice that we have the following elementary identity for any smooth function
\begin{multline*}\int_{\R} x^{2s-2\bar h +2} |\partial_x^{\bar h-1} v(x)|^2
dx=-\int_{\R} x^{2s-2\bar h +2} \partial_x^{\bar h} v(x) \partial_x^{\bar h-2} \bar v(x) dx
\\- (2s-2\bar h +2) \int_{\R} x^{2s-2\bar h +1} \partial_x^{\bar h-1} v(x)  \partial_x^{\bar h-2} \bar v(x)  dx \end{multline*}
which implies
\begin{multline*}
\int_{\R} x^{2s-2\bar h +2} |\partial_x^{\bar h-1} v(x)|^2 
dx \lesssim \sqrt{\int_{\R} x^{2s-2\bar h} | \partial_x^{\bar h} v(x)|^2 dx}\sqrt {\int_{\R} x^{2s-2\bar h +4}|\partial_x^{\bar h-2} v(x)|^2 dx}\\
+ \sqrt{\int_{\R} x^{2s-2\bar h} |\partial_x^{\bar h-1} v(x)|^2 dx}\sqrt {\int_{\R} x^{2s-2\bar h +2}|\partial_x^{\bar h-2} v(x)|^2 dx}
\,.\end{multline*}
and hence for every $t>0$
\begin{multline}\label{intpp}
\int_{\R} \big( \frac xt\big)^{2s-2\bar h +2} |\partial_x^{\bar h-1} v(x)|^2 
dx \\\lesssim \sqrt{\int_{\R} \big(\frac xt\big)^{2s-2\bar h} | \partial_x^{\bar h} v(x)|^2 dx}\sqrt {\int_{\R} \big( \frac xt\big)^{2s-2\bar h +4}|\partial_x^{\bar h-2} v(x)|^2 dx}\\
+ \frac 1t \sqrt{\int_{\R} \big( \frac xt\big)^{2s-2\bar h} |\partial_x^{\bar h-1} v(x)|^2 dx}\sqrt {\int_{\R} \big(\frac xt\big)^{2s-2\bar h +2}|\partial_x^{\bar h-2} v(x)|^2 dx}
\,.\end{multline}
Next introduce a cut-off function $\varphi\in C^\infty(\R)$ such that
$$\varphi(x)=0 \quad \forall\, |x|<\frac 12 , \quad
\varphi(x)=1 \quad \forall\, |x|>1$$ and define
$$\varphi_R(t,x)=\varphi(\frac x{Rt}).$$
We claim that for every fixed $R>0$
\begin{equation}\label{defmormont}
\int_\R x^{2s-2k} |\partial_x^j (\varphi_R (t,x) u(t,x))|^2 dx
\lesssim \langle t \rangle^{2s-2k}, \quad \forall \,0\leq j\leq k\leq s
\end{equation}
and also
\begin{equation}\label{defmormonth}
\lim_{R\rightarrow \infty} (\limsup_{t\rightarrow \infty}
\int_\R \big(\frac x t\big)^{2s-2\bar h} |\partial_x^{\bar h} (\varphi_R (t,x) u(t,x))|^2 dx)=0.
\end{equation}
Once \eqref{defmormont} and \eqref{defmormonth} are established then we get
from \eqref{intpp}, where we choose
$v(x)= \varphi_R (t,x) u(t,x)$,
\begin{equation}\label{fofab}
\lim_{R\rightarrow \infty} (\limsup_{t\rightarrow \infty} 
\int_{\R} \big( \frac x t\big)^{2s-2\bar h +2} |\partial_x^{\bar h-1} (\varphi_R (t,x) u(t,x))|^2 
dx)=0.\end{equation}
Next notice that 
$$\lim_{t\rightarrow \infty}
\|\partial_x^{\bar h-1-j} \varphi_R (t,x)\|_{L^\infty(\R)} =0, \quad \forall j=0,\dots, \bar h-2 $$
and by \eqref{induct1}
$$\int_{\R} x^{2s-2\bar h +2} |
\partial^{j}_x u(t,x))|^2 dx\lesssim \langle t\rangle^{2s-2\bar h +2}$$
and hence
\begin{multline}\label{forpal}
\limsup_{t\rightarrow \infty} 
\int_{\R} \big( \frac x t\big)^{2s-2\bar h +2} |\partial_x^{\bar h-1-j} \varphi_R (t,x) \partial^{j}_x u(t,x))|^2 dx=0, \\ \forall j=0,\dots, \bar h-2 
\end{multline}
By using \eqref{forpal} and the Leibnitz rule to compute
$\partial_x^{\bar h-1} (\varphi_R (t,x) u(t,x))$ we get
$$
\limsup_{t\rightarrow \infty}
\int_{\R} \big(\frac xt\big)^{2s-2\bar h +2} \big( |\varphi_R (t,x)|^2 |\partial_x^{\bar h-1} u(t,x)
|^2 - |\partial_x^{\bar h-1}  (\varphi_R (t,x)u(t,x))
|^2)\big) dx=0.$$
Hence by \eqref{fofab} we get
$$
\lim_{R\rightarrow \infty} \big(\limsup_{t\rightarrow \infty}
\int_{\R} \big(\frac xt\big)^{2s-2\bar h +2} |\varphi_R (t,x)|^2 |\partial_x^{\bar h-1} u(t,x)
|^2 dx \big)=0$$
and due to the cut-off property of $\varphi$ it implies
$$
\lim_{R\rightarrow \infty} \big( \limsup_{t\rightarrow \infty}
\int_{|x|>Rt} \big(\frac xt\big)^{2s-2\bar h +2} |\partial_x^{\bar h-1} u(t,x)
|^2 dx \big) =0.$$
Concerning the proof of \eqref{defmormont}
it follows, once the Leibniz rule is used to compute the derivative of a product,
by \eqref{induct1}.
For the proof of 
\eqref{defmormonth} notice that again by the Leibnitz rule
and the fact that
$$\lim_{t\rightarrow \infty}
\|\partial_x^{\bar h-j} \varphi_R (t,x)\|_{L^\infty(\R)} =0, \quad \forall j=0,\dots, \bar h-1, $$
we get
$$
\limsup_{t\rightarrow \infty}
\int_{\R} \big(\frac xt\big)^{2s-2\bar h} \big( |\varphi_R (t,x)|^2 |\partial_x^{\bar h} u(t,x)
|^2 - |\partial_x^{\bar h}  (\varphi_R (t,x)u(t,x))
|^2)\big) dx=0.$$
We conclude the proof of \eqref{defmormonth}
since due to the cut-off property of $\varphi$ we get
\begin{multline*}\lim_{R\rightarrow \infty} \big(\limsup_{t\rightarrow \infty}
\int_{\R} \big(\frac xt\big)^{2s-2\bar h} \big( |\varphi_R (t,x)|^2 |\partial_x^{\bar h} u(t,x)
|^2dx\big)
\\\leq \lim_{R\rightarrow \infty} \big(\limsup_{t\rightarrow \infty}
\int_{2|x|> Rt} \big(\frac xt\big)^{2s-2\bar h} |\partial_x^{\bar h} u(t,x)
|^2 dx\big)=0\end{multline*}
where at the last step we have used the inductive assumption \eqref{ass123}.
\\

The last step is the proof of \eqref{inductch} for $h=0$ provided that it is known for $h=1$
(this fact in turn has been established above).

Then we shall prove the following equivalent version of \eqref{inductch}
for $h=0$:
\begin{align}\label{inductchlast}
&\forall \epsilon>0 \hbox{ } \exists \tilde R_\epsilon>0, \tilde t_\epsilon>0 \hbox { s.t. }\\\nonumber
\int_{|x|>\tilde R_\epsilon t} |x|^{2s}| u(t,x)|^2& \lesssim \epsilon t^{2s}, \quad 
\forall t>\tilde t_\epsilon
\end{align}
by using the fact (this is an equivalent version of \eqref{inductch} for $h=1$)
\begin{align}\label{inductchlastprime}
&\forall \epsilon>0 \hbox{ } \exists R_\epsilon>0, t_\epsilon>0 \hbox { s.t. }\\\nonumber
\int_{|x|>R_\epsilon t} |x|^{2s-2}|\partial_x u(t,x)|^2& \lesssim \epsilon t^{2s-2}, \quad 
\forall t>t_\epsilon.
\end{align}
We introduce
a function $\varphi\in C^\infty(\R)$ such that
$$\varphi(x)=0 \quad \forall |x|<1, \quad
\varphi(x)=1 \quad \forall  |x|>2$$
and the corresponding rescaled function $\varphi_M(x)=\varphi\left (\frac xM
\right )$.
By direct computation we get
$$\frac d{dt} \int_{\R} \varphi_M(x) x^{2s} |u(t,x)|^2 dx= 2 \Im 
\int_{\R} \partial_x (\varphi_M(x) x^{2s})\bar u  \partial_x u dx$$
and hence
\begin{multline*}\big |\frac d{dt} \int_{\R} \varphi_M(x) x^{2s} |u(t,x)|^2 dx\big |\lesssim \frac 1M\int_M^{2M} 
x^{2s} |\bar u(t,x)| |\partial_x u(t,x)| dx\\
+ \int_M^{\infty} |x|^{2s-1} |\bar u(t,x)| |\partial_x u(t,x)| dx
\lesssim \int_M^{\infty} |x|^{2s-1} |\bar u(t,x)| |\partial_x u(t,x)| dx.
\end{multline*}
As a consequence we get
\begin{multline*}
\int_{\R} \varphi_M(x) x^{2s} |u(t,x)|^2 dx\lesssim \int_{\R} \varphi_M(x) x^{2s} |u(\bar t,x)|^2 dx
\\
+\int_{\bar t}^t \sqrt{\big (\int_M^{\infty} x^{2s-2} |\partial_x u(\tau, x)|^2 dx\big)}
\sqrt{\big (\int_{\R} x^{2s} |u(\tau, x)|^2 dx\big)}
d\tau,\quad \forall\, t>\bar t.
\end{multline*}
Next we fix 
$\bar t=t_\epsilon$ and $M=R_\epsilon t$ (where $t_\epsilon, R_\epsilon$
are given by \eqref{inductchlastprime}), then by \eqref{inductchlastprime} and the upper bound \eqref{upperboundt2s} we get
$$\int_{|x|>R_\epsilon t} 
x^{2s} |u(t,x)|^2 dx\lesssim \int_{\R}
x^{2s} |u(t_\epsilon,x)|^2 dx + \epsilon \int_{t_\epsilon}^t \tau^{2s-1}d\tau$$
which implies 
$$\int_{|x|>R_\epsilon t} 
x^{2s} |u(t,x)|^2 dx\lesssim \int_{\R}
x^{2s} |u(t_\epsilon,x)|^2 dx + \epsilon t^{2s} - \epsilon t_\epsilon^{2s}.$$
We conclude provided that we multiply 
the previous inequality by $t^{-2s}$ and we take  $t\gg t_\epsilon$.
\end{proof}
We can now conclude the proof of \eqref{momentumprecise}.
Fix $\epsilon>0$. Then thanks to  \eqref{equivfac}, we have that there exists $t_\epsilon, R_\epsilon>0$ such that
\begin{equation}\label{externalcone}
\int_{|x|>R_\epsilon t}  \big( \frac{x}{t}\big)^{2s} |u(t,x)|^2  dx< \frac{\epsilon}{4}, \quad 
\forall\,  t> t_\epsilon
\end{equation}
and also
\begin{equation}\label{ddd}
2^{2s}\big|\int_{|x|<R_{\epsilon}} |x|^{2s}|\hat f_+(x)|^2 dx-\int_\R |\partial_x^s f_+(x)|^2 dx\big|<\frac{\epsilon}{2}\,.
\end{equation}
Notice that by combining \eqref{alglin} 
and \eqref{saymptoticbehavior}  we get
\begin{multline*}\left \|\big(\frac{x}{t}\big)^s\big (u(t,x) - \frac {e^{i\frac{x^2}{4t}}}{\sqrt {2i t}} \hat f_+(\frac {x}{2t})
\big)
\right \|_{L^2(|x|<R_\epsilon t)}\\\leq 
\left \|\big(\frac{x}{t}\big)^s\big (u(t,x) - e^{it\partial_{x}^2} f_+\big)
\right \|_{L^2(|x|<R_\epsilon t)}\\+
\left \|\big(\frac{x}{t}\big)^s\big (e^{it\partial_{x}^2} f_+ -
\frac {e^{i\frac{x^2}{4t}}}{\sqrt {2i t}} \hat f_+(\frac {x}{2t})
\big)
\right \|_{L^2(|x|<R_\epsilon t)}
\\\leq R_\epsilon^s \left \|u(t,x) - e^{it\partial_{x}^2} f_+
\right \|_{L^2(\R)} + R_\epsilon^s \left \|e^{it\partial_{x}^2} f_+ -
\frac {e^{i\frac{x^2}{4t}}}{\sqrt {2 i t}} \hat f_+(\frac {x}{2t})
\right \|_{L^2({\R})}
\overset{t\rightarrow \infty}\longrightarrow 0.\end{multline*}
Summarizing we get
\begin{equation*}
\left \|\big(\frac{x}{t}\big)^s\big (u(t,x) - \frac {e^{i\frac{x^2}{4t}}}{\sqrt {2 i t}} \hat f_+(\frac {x}{2t})
\big)
\right \|_{L^2(|x|<R_\epsilon t)}
\overset{t\rightarrow \infty}\longrightarrow 0.
\end{equation*}
Therefore, thanks to \eqref{externalcone}, we obtain that there exists $\tilde{t}_{\epsilon}\geq t_\epsilon$ such that for $t>\tilde{t}_{\epsilon}$,
$$
\Big|\int_{\R}  \big( \frac{x}{t}\big)^{2s} |u(t,x)|^2  dx-
\int_{|x|<R_\epsilon t}  \big (\frac{x}{t}\big )^{2s} \frac {1}{2t} |\hat f_+(\frac {x}{2t})|^2  \Big|<\frac{\epsilon}{2}\,.
$$
By a change of variable we have
\begin{equation}\label{externaleasy}
\int_{|x|<R_\epsilon t}  \big (\frac{x}{t}\big )^{2s} \frac {1}{2t} |\hat f_+(\frac {x}{2t})|^2  dx =2^{2s} \int_{|x|<R_\epsilon} |x|^{2s}|\hat f_+(x)|^2 dx.
\end{equation}
Therefore coming back to \eqref{ddd}, we get 
$$
\Big|\int_{|x|<R_\epsilon t}  \big (\frac{x}{t}\big )^{2s} \frac {1}{2t} |\hat f_+(\frac {x}{2t})|^2 dx-2^{2s}\int_\R |\partial_x^s f_+(x)|^2 dx\Big|<\frac{\epsilon}{2}
$$
and therefore, we finally obtain that for  $t>\tilde{t}_{\epsilon}$,
$$
\Big|\int_{\R}  \big( \frac{x}{t}\big)^{2s} |u(t,x)|^2  dx-2^{2s}\int_\R |\partial_x^s f_+(x)|^2 dx\Big|<\epsilon\,.
$$

\end{document}